\documentclass[12pt]{amsart}

\usepackage[margin = 1in]{geometry}
\usepackage{amsfonts, amsmath, amscd, amssymb,
latexsym, mathrsfs, mathtools, dsfont,
slashed, stmaryrd, verbatim, wasysym, bbm, url}
\usepackage[utf8]{inputenc}
\usepackage[backend=bibtex,bibstyle=authoryear,citestyle=alphabetic,giveninits=true,doi=false,isbn=false,url=false,backref]{biblatex}
\usepackage[usenames]{xcolor}
\usepackage{enumerate}
\usepackage[shortlabels]{enumitem}
\usepackage{scalerel}
      
\DeclareFieldFormat{labelalphawidth}{\mkbibbrackets{#1}}

\defbibenvironment{bibliography}
  {\list
     {\printtext[labelalphawidth]{%
        \printfield{labelprefix}%
    \printfield{labelalpha}%
        \printfield{extraalpha}}}
     {\setlength{\labelwidth}{\labelalphawidth}%
      \setlength{\leftmargin}{\labelwidth}%
      \setlength{\labelsep}{\biblabelsep}%
      \addtolength{\leftmargin}{\labelsep}%
      \setlength{\itemsep}{\bibitemsep}%
      \setlength{\parsep}{\bibparsep}}%
      }
  {\endlist}
  {\item}
\DeclareNameAlias{author}{last-first}
\newtheorem*{acknowledgements}{Acknowledgements}
\newtheorem*{theorem*}{Theorem}
\newtheorem{theorem}{Theorem}
\newtheorem{corollary}{Corollary}

\newtheorem{lemma}[corollary]{Lemma}
\newtheorem{proposition}[corollary]{Proposition}
\theoremstyle{definition}

\newtheorem{example}{Example}
\newtheorem{remark}[example]{Remark}

\numberwithin{equation}{section}
\numberwithin{theorem}{section}

\let\oldsqrt\sqrt
\def\sqrt{\mathpalette\DHLhksqrt}
\def\DHLhksqrt#1#2{%
\setbox0=\hbox{$#1\oldsqrt{#2\,}$}\dimen0=\ht0
\advance\dimen0-0.2\ht0
\setbox2=\hbox{\vrule height\ht0 depth -\dimen0}%
{\box0\lower0.4pt\box2}}

\usepackage{verbatim} 
\DeclareFontFamily{U}{mathx}{\hyphenchar\font45}
\DeclareFontShape{U}{mathx}{m}{n}{
      <5> <6> <7> <8> <9> <10>
      <10.95> <12> <14.4> <17.28> <20.74> <24.88>
      mathx10
      }{}
\DeclareSymbolFont{mathx}{U}{mathx}{m}{n}
\DeclareFontSubstitution{U}{mathx}{m}{n}
\DeclareMathAccent{\widecheck}{0}{mathx}{"71}

\newcommand\eps\varepsilon
\renewcommand\epsilon\varepsilon

\newcommand{\abs}[1]{\left\lvert #1 \right\rvert}
\newcommand{\smallabs}[1]{\lvert #1 \rvert}

\newcommand{\norm}[1]{\lVert #1 \rVert}
\newcommand\inner[1]{\langle #1 \rangle}

\newcommand\Mand{\text{ and }}

\newcommand\paperintro%
        {%
         }
\newcommand\paperbody%
        {%
         }

\newcommand\bbE{\mathbb{E}}

\newcommand\bbR{\mathbb{R}}

\newcommand\cC{\mathcal{C}}

\newcommand\cE{\mathcal{E}}

\newcommand\cI{\mathcal{I}}

\DeclareMathAlphabet{\mathpzc}{OT1}{pzc}{m}{it}

\newcommand{\sbs}{\subset}

\DeclareMathOperator{\Var}{Var}
\DeclareMathOperator{\Ent}{Ent}

\DeclareMathOperator{\Capa}{Cap}

\usepackage[refpage]{nomencl}

\makenomenclature

\usepackage{tikz, tikz-cd, tkz-euclide}
\usetikzlibrary{calc,positioning,intersections,quotes,decorations.markings}
\makeatletter
\def\@tocline#1#2#3#4#5#6#7{\relax
  \ifnum #1>\c@tocdepth 
  \else
    \par \addpenalty\@secpenalty\addvspace{#2}%
    \begingroup \hyphenpenalty\@M
    \@ifempty{#4}{%
      \@tempdima\csname r@tocindent\number#1\endcsname\relax
    }{%
      \@tempdima#4\relax
    }%
    \parindent\z@ \leftskip#3\relax \advance\leftskip\@tempdima\relax
    \rightskip\@pnumwidth plus4em \parfillskip-\@pnumwidth
    #5\leavevmode\hskip-\@tempdima
      \ifcase #1
       \or\or \hskip 1em \or \hskip 2em \else \hskip 3em \fi%
      #6\nobreak\relax
    \hfill\hbox to\@pnumwidth{\@tocpagenum{#7}}\par
    \nobreak
    \endgroup
  \fi}
\makeatother

\def\annu#1{_{%
  \vbox{\hrule height .2pt 
    \kern 1pt 
    \hbox{$\scriptstyle {#1}\kern 1pt$}%
  }\kern-.05pt 
  \vrule width .2pt 
}}

\allowdisplaybreaks
\usepackage[colorlinks,citecolor=blue,urlcolor=red,bookmarks=false,hypertexnames=true]{hyperref} 

\makeatletter
\def\keywords{\xdef\@thefnmark{}\@footnotetext}
\makeatother

\begin{document}

\title{$L^2$-stability for the variance Brascamp-Lieb inequality}

\author{K\'aroly J. B\"or\"oczky}
\address{Alfr\'ed R\'enyi Institute of Mathematics, Re\'altanoda utca 13-15, Budapest, Hungary, 1053, and E\"otv\"os University, Institute of Mathematics, P\'azmany P\'eter s\'et\'any 1/C, Budapest, Hungary, 1117}
\curraddr{}
\email{boroczky.karoly.j@renyi.hu}
\thanks{Supported by NKFIH grant 150613}

\author{Yaozhong Qiu}
\address{MODAL'X, Universit\'e Paris Nanterre, 92000 Nanterre, France}
\curraddr{}
\email{yqiu@parisnanterre.fr}
\thanks{This project has received funding from the European Union’s Horizon 2020 research and innovation programme under the Marie Sk\l{}odowska-Curie grant agreement No 101034255. \scalerel*{\includegraphics{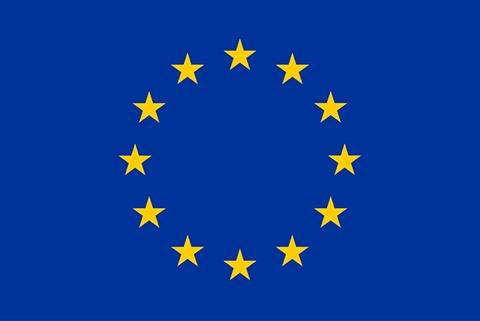}}{A}}

\author{Cyril Roberto}
\address{MODAL'X, Universit\'e Paris Nanterre, 92000 Nanterre, France}
\curraddr{}
\email{croberto@math.cnrs.fr}
\thanks{This work has been conducted within the FP2M federation (CNRS FR 2036) and FSM CY Initiative}

\subjclass[2020]{Primary 26D10, 39B82}

\date{}

\begin{abstract}
We prove an $L^2$-stability estimate for the variance Brascamp-Lieb inequality \cite{brascamp1976extensions} by bootstrapping the recent $L^1$-stability theorem of Machado and Ramos \cite{machado2025quantitative} under an additional assumption, which we call the super-Brascamp-Lieb inequality, of independent interest. 
\end{abstract}

\maketitle

\section{Introduction and main results}

In this paper, we prove $L^2$-stability estimates for the variance Brascamp-Lieb inequality \cite{brascamp1976extensions} which asserts for a log-concave probability measure of the form $d\mu_V = e^{-V}dx$ on $\bbR^n$ with $V \in \cC^2$ convex and satisfying $V'' > 0$ that
\begin{equation}\label{brascamp-lieb}    
    \Var_{\mu_V}(f) \leq \int_{\bbR^n} \inner{(V'')^{-1}f', f'}d\mu_V 
\end{equation}
for all $f: \bbR^n \rightarrow \bbR$ in $L^2(\mu_V)$. Here and in the sequel, if $u: \bbR^n \rightarrow \bbR$ is $\cC^2$, then $u' = \nabla u$ is its $n$-dimensional gradient, $u'' = D^2u$ its $n \times n$ Hessian matrix, $(V'')^{-1}$ the inverse matrix of $V''$, and $\inner{\cdot, \cdot}$ the standard scalar product of $\bbR^n$. 

Equality in \eqref{brascamp-lieb} is attained if and only if $f$ is of the form $f_{\sigma,\theta} \coloneq \sigma + \inner{\theta, V'}$ for some $\sigma \in \bbR$ and $\theta \in \bbR^n$. It is readily checked $\sigma = \int f_{\sigma, \theta}d\mu_V$ and since \eqref{brascamp-lieb} is invariant with respect to translation of $f$ to $f + c$ by any constant $c \in \bbR$, we may consider without loss of generality functions $f$ with mean zero $\int fd\mu_V = 0$ and extremisers of the form
\begin{equation}\label{extremisers}
    f_{\theta} \coloneq \inner{\theta, V'}, \quad \theta \in \bbR^n.
\end{equation}
The goal of this article is to establish the deficit 
\[ 
    \delta(f) = \int_{\bbR^n} \inner{(V'')^{-1}f', f'}d\mu_V  - \Var_{\mu_V}(f) 
\]
can be bounded below by some distance of $f$ to the space of extremisers. 

Before we state our main result, we first provide some review of the existing literature. To our best knowledge, the first stability result for the Brascamp-Lieb inequality is due to Harg\'e who proved \cite[Theorem~1]{harge2008reinforcement} $\delta(f) \geq C(n, V) (\int Vfd\mu_V)^2$. The deficit does not vanish \emph{only} on the space of extremisers as pointed out in \cite{cordero2017transport} but later Bolley, Gentil, and Guillin rediscovered Harg\'e's result with a different constant and showed in the same paper \cite[Theorem~4.3]{bolley2018dimensional} an estimate of the form $\delta(f) \geq \int \abs{f - \inner{f', (V'')^{-1}V'}}^2\omega(n, V)d\mu_V$ where $\omega(n, V)$ is a weight. Here the deficit does \emph{only} vanish for $f$ of the form \eqref{extremisers} but $\inner{f', (V'')^{-1}V'}$ is not of the form \eqref{extremisers} and a weight appears. This stands in contrast to the result of Cordero-Erausquin who proved \cite[Proposition~1.6]{cordero2017transport} 
\begin{equation}\label{stability-ce17}
    \delta(f) \geq C_2(V) \int \abs{f - f_{\theta^*}}^2d\mu_V 
    \Mand \delta(f) \geq C_1(V) \left(\int \abs{f - f_{\theta^*}}d\mu_V\right)^2
\end{equation} 
where $\theta^*$ is the barycentre
\[
    \theta^* = \theta^*(f) \coloneq \int xf(x)d\mu_V(x)
\]
of $f$ and $C_1(V), C_2(V) > 0$ are constants depending on $V$. For another $L^1$-stability estimate, see \cite[Proposition~4.3]{livshyts2024conjectural}, and for a refinement wherein $V''$ is replaced with a stronger matrix weight, we refer the reader to \cite{bonnefont2014intertwining, bonnefont2017intertwinings}.

A common feature of these results is that the stability depends on $V$. A natural question to ask is whether there exists a stability estimate with constants independent of $V$. This was recently positively answered by Machado and Ramos \cite[Theorem~1.1]{machado2025quantitative} who exploited the stability of the Pr\'ekopa-Leindler inequality due to Figalli, van Hintum, and Tiba \cite[Theorem~1.6]{figalli2025sharp} together with a proof of the Brascamp-Lieb inequality through the Pr\'ekopa-Leindler inequality by Bobkov and Ledoux \cite[Theorem~2.1]{bobkov2000brunn}. Specifically, the authors prove there exists $\theta = \theta(f) \in \bbR^n$ and a constant $C_1 = C_1(n)$ depending only on dimension $n$ such that 
\begin{equation}\label{l1-stability-1}
    \int \abs{f - f_{\theta}}d\mu_V \leq C_1 \sqrt{\delta(f)}. 
\end{equation}

The natural $L^2$-analogue of \eqref{l1-stability-1} would presumably follow from a conjectural $L^2$-stability estimate of the Pr\'ekopa-Leindler inequality which is not yet available in the literature. In this paper, we shall instead demonstrate a self-improvement argument which allows one to bootstrap the $L^1$-stability estimate into an $L^2$-estimate and in turn make explicit the choice of $\theta$ appearing in \eqref{l1-stability-1}. 

Our main results are the following. Suppose there exists $\delta \in (0,1)$ and $C_0 > 0$ such that for all $f$ smooth
\begin{equation}\label{eq:super-brascamp-lieb-intro}
    \Var_{\mu_V}(f) \leq \delta \int \inner{(V'')^{-1}f', f'}d\mu_V + C_0\left(\int \abs{f}d\mu_V\right)^2.
\end{equation}
Then there exists $\theta = \theta(f) \in \bbR^n$ 
and $C_2 = C_2(\delta, C_0, C_1)$ such that
\begin{equation}\label{l2-stability-1-intro}
    \int \abs{f - f_{\theta}}^2d\mu_V \leq C_2 \delta(f). 
\end{equation}
Moreover, for 
\[ 
    C_3 \coloneq C_1 + \sqrt{C_2\bbE_{\mu_V}(\abs{x}^2)}\bbE_{\mu_V}(\abs{V'}) \quad \Mand \quad C_4 \coloneq C_4(\delta, C_0, C_3), 
\]
one can replace $\theta$ by $\theta^*$ in \eqref{l1-stability-1} and in \eqref{l2-stability-1-intro} by substituting $C_1, C_2$ by $C_3, C_4$ respectively. See Theorem \ref{thm:l1+sbl=l2} below for a precise statement.

The main point is that \eqref{l2-stability-1-intro} asserts the $L^1$-stability estimate of Machado and Ramos can be improved into an $L^2$-estimate assuming \eqref{eq:super-brascamp-lieb-intro}. Moreover, although the extremiser $f_\theta$ in \eqref{l1-stability-1} is not explicit, since $\theta$ is deduced by compactness, one may expect $\theta$ can be taken as the barycentre $\theta^*$ as seen in Cordero-Erausquin's stability theorems \eqref{stability-ce17}. The second part of our result shows this is indeed the case and more precisely $L^2$-stability with respect to $\theta$ implies $L^1$-stability with respect to $\theta^*$ but we must pay a price, depending on $V$, in the constant. This latter $L^1$-estimate with explicit $\theta = \theta^*$ can in turn be again bootstrapped. We remark that this idea is quite general, in the sense that this $L^1$-to-$L^2$-stability improvement via \eqref{eq:super-brascamp-lieb-intro} also holds when starting from Cordero-Erausquin's $L^1$-stability estimate instead.  

In light of the qualitative similarity between \eqref{eq:super-brascamp-lieb-intro} and the super-log-Sobolev and super-Poincar\'e inequalities developed by Davies and Simon \cite{davies1984ultracontractivity} and Wang \cite{wang2000functional} respectively, we shall give \eqref{eq:super-brascamp-lieb-intro} the name \emph{super-Brascamp-Lieb inequality}, see \S3 for a precise definition. A secondary goal of this paper is to develop new technical tools that might help to understand the inequality as well as give examples of convex potentials for which \eqref{eq:super-brascamp-lieb-intro} is satisfied and thus the stability conclusions of Theorem \ref{thm:l1+sbl=l2} hold. 

The paper is organised as follows. In \S2, we prove our main stability theorem for the variance Brascamp-Lieb inequality assuming \eqref{eq:super-brascamp-lieb-intro}. In \S3, we study the super-Brascamp-Lieb inequality in detail, in particular related inequalities, tensorisation and glueing properties, one-dimensional Muckenhoupt-type conditions, and examples.

\section{$L^2$-stability for the variance Brascamp-Lieb inequality}

In this section we state and prove our main stability theorem for the variance Brascamp-Lieb inequality. Recall $f_{\theta} = \inner{\theta, V'}$ and $\theta^* = \theta^*(f) = \int xf(x)d\mu_V(x)$ from the introduction.

\begin{theorem}\label{thm:l1+sbl=l2}
Let $f: \bbR^n \rightarrow \bbR$ be smooth, such that  $\epsilon \coloneq \int \inner{(V'')^{-1} f',  f'}d\mu_V - \Var_{\mu_V}(f) > 0$ and $\int fd\mu_V = 0$. Suppose that for any $g: \bbR^n \rightarrow \bbR$ smooth
    \begin{equation}\label{eq:super-brascamp-lieb}
        \Var_{\mu_V}(g) \leq \delta \int \inner{(V'')^{-1} g',  g'}d\mu_V + C_0 \left(\int \abs{g}d\mu_V\right)^2
    \end{equation}
    for some $\delta \in (0, 1)$ and some $C_0 > 0$. Then there exists $\theta = \theta(f) \in \bbR^n$ such that
    \begin{equation}\label{l2-stability-1}
        \int \abs{f - f_{\theta}}^2d\mu_V \leq C_2\epsilon 
    \end{equation}
    with $C_2 \coloneq \frac{\delta + C_0C_1^2}{1 - \delta}$ where $C_1$ is defined in \eqref{l1-stability-1}.
    Furthermore, it holds
    \begin{equation}\label{l1-stability-2}
        \int \abs{f - f_{\theta^*}}d\mu_V \leq C_3\sqrt{\epsilon}
        \Mand 
        \int \abs{f - f_{\theta^*}}^2d\mu_V \leq C_4\epsilon
    \end{equation}
    with $C_3 \coloneq C_1 + \sqrt{C_2\bbE_{\mu_V}(\abs{x}^2)}\bbE_{\mu_V}(\abs{V'})$ and $C_4 \coloneq \frac{\delta + C_0C_3^2}{1-\delta}$.
\end{theorem}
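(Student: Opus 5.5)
Apply the super-Brascamp-Lieb inequality \eqref{eq:super-brascamp-lieb} to $g = f - f_\theta$, where $\theta$ is the vector furnished by the Machado--Ramos estimate \eqref{l1-stability-1}. The key point is that the deficit is a quadratic form that vanishes on extremisers, so its polarisation against extremisers also vanishes. Write $Q(g) = \int \inner{(V'')^{-1}g',g'}d\mu_V - \Var_{\mu_V}(g)$, which is a nonnegative quadratic form by \eqref{brascamp-lieb}. Since $Q(f_\theta)=0$ and $Q\ge 0$, the Cauchy--Schwarz inequality for the associated bilinear form gives $Q(f,f_\theta)=0$, hence $Q(f-f_\theta)=Q(f)=\epsilon$. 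One can also verify this directly by integrating by parts: $\int \inner{(V'')^{-1}f', (f_\theta)'}d\mu_V = \int \inner{f',\theta}d\mu_V = \int f \inner{\theta,V'}d\mu_V = \mathrm{Cov}(f,f_\theta)$, using $\int V' d\mu_V=0$. Also $\int g\,d\mu_V = 0$ for $g=f-f_\theta$, since both $f$ and $f_\theta$ have mean zero.

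Set $A = \int \inner{(V'')^{-1}g',g'}d\mu_V$, so that $\Var(g) = A - \epsilon$ and $\Var(g)=\int g^2 d\mu_V$. Inequality \eqref{eq:super-brascamp-lieb} gives
\[
A-\epsilon \le \delta A + C_0\Big(\int |g|\,d\mu_V\Big)^2 \le \delta A + C_0C_1^2\epsilon ,
\]
and therefore $(1-\delta)A \le (1+C_0C_1^2)\epsilon$. Then
\[
\int g^2 d\mu_V = A-\epsilon \le \frac{(1+C_0C_1^2) - (1-\delta)}{1-\delta}\,\epsilon = \frac{\delta+C_0C_1^2}{1-\delta}\,\epsilon = C_2\epsilon ,
\]
which is exactly \eqref{l2-stability-1}. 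The only delicate point is justifying the integrations by parts, for example by approximation or by assuming sufficient integrability of $f$, $f'$ and $V'$.

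For \eqref{l1-stability-2}, first identify the barycentre of an extremiser. Integration by parts gives $\int x_i \partial_j V\,d\mu_V = \delta_{ij}$, so $\theta^*(f_\theta)=\theta$. Hence $\theta^*(f)-\theta = \int x\,(f-f_\theta)\,d\mu_V$, and by Cauchy--Schwarz together with the previous step,
\[
|\theta^*-\theta| \le \sqrt{\bbE_{\mu_V}(|x|^2)}\,\sqrt{C_2\epsilon}.
\]
The triangle inequality then gives
\[
\int |f-f_{\theta^*}|\,d\mu_V \le \int |f-f_\theta|\,d\mu_V + |\theta-\theta^*|\,\bbE_{\mu_V}(|V'|) \le C_3\sqrt{\epsilon}.
\]
The $L^2$ bound with $\theta^*$ follows by rerunning the first argument with $\theta^*$ in place of $\theta$ and $C_3$ in place of $C_1$; that argument used only the $L^1$ bound and the fact that $f_{\theta^*}$ is an extremiser, and it yields $C_4=\frac{\delta+C_0C_3^2}{1-\delta}$.

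I expect the main obstacle to be the identity $Q(f-f_\theta)=\epsilon$, meaning the orthogonality of the deficit form to extremisers. It is clean via nonnegativity of $Q$, but it requires $f_\theta\in L^2(\mu_V)$, that is, $\bbE_{\mu_V}|V'|^2<\infty$. This holds for log-concave $\mu_V$, since $\int |V'|^2 d\mu_V = \int \Delta V\,d\mu_V$, provided that quantity is finite, which I would assume as an implicit condition.
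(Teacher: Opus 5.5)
Your proposal is correct and follows the paper's proof essentially step for step. You apply the super-Brascamp--Lieb inequality to $g=f-f_\theta$ and use that the deficit of $f-f_\theta$ is still $\epsilon$ (the paper's Lemma~\ref{lem-l2-equals-h1}, which you obtain by the same integration by parts or by polarising the nonnegative deficit form). You then absorb the $\delta$-term to get $C_2$, bound $\abs{\theta^*-\theta}$ via $\int x f_\theta\,d\mu_V=\theta$ and Cauchy--Schwarz, and rerun with $C_3$. The differences are only cosmetic: you solve for the energy first, and you state explicitly the mean-zero property of $g$ and the integrability assumptions that the paper leaves implicit.
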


Before we begin the proof of Theorem \ref{thm:l1+sbl=l2} let us first observe that $L^2$-stability is in fact equivalent to $H^1$-stability with respect to the norm induced by the Brascamp-Lieb energy $\int \inner{(V'')^{-1} f', f'}d\mu_V$.

\begin{lemma}\label{lem-l2-equals-h1}
    Assume $\int \inner{(V'')^{-1} f', f'}d\mu_V = \Var_{\mu_V}(f) + \epsilon$ with $\epsilon > 0$ and $\int fd\mu_V = 0$.
    Then for all $\theta \in \bbR^n$, it holds
    \[ 
        \int \inner{(V'')^{-1}(f' - f_\theta'), f' - f_\theta'}d\mu_V = \Var_{\mu_V} (f - f_\theta) + \epsilon. 
    \]
\end{lemma}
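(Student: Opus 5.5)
We expand both sides in $\theta$ and check that the deficit is unchanged when $f$ is replaced by $f - f_\theta$. The key input is an integration by parts identity for $\mu_V = e^{-V}dx$: for smooth $g$ with suitable decay,
\[
    \int \partial_i g \, d\mu_V = \int g\,\partial_i V \, d\mu_V .
\]
Since $f_\theta' = V''\theta$, we have $(V'')^{-1}f_\theta' = \theta$. Hence the left-hand side of the desired identity expands as
\[
    \int \inner{(V'')^{-1}f',f'}d\mu_V - 2\int \inner{\theta, f'}d\mu_V + \int \inner{\theta, V''\theta}d\mu_V .
\]

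Next we evaluate the last two terms using the identity above.
\begin{itemize}
\item Applying it with $g = f$ gives $\int \inner{\theta, f'}d\mu_V = \int f\inner{\theta, V'}d\mu_V = \int f f_\theta \, d\mu_V$.
\item Applying it with $g = \partial_j V$ gives $\int \partial_{ij}V\, d\mu_V = \int \partial_i V\partial_j V\, d\mu_V$, so $\int \inner{\theta, V''\theta}d\mu_V = \int f_\theta^2 d\mu_V$.
\item Applying it with $g = 1$ gives $\int V' d\mu_V = 0$, so $f_\theta$ has mean zero. Since $f$ has mean zero too, so does $f - f_\theta$, and
\[
    \Var_{\mu_V}(f - f_\theta) = \int f^2 d\mu_V - 2\int f f_\theta\, d\mu_V + \int f_\theta^2 d\mu_V .
\]
\end{itemize}

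Substituting these into the expansion, the left-hand side becomes $\int \inner{(V'')^{-1}f',f'}d\mu_V - 2\int ff_\theta\, d\mu_V + \int f_\theta^2 d\mu_V$. By hypothesis this equals $\Var_{\mu_V}(f) + \epsilon - 2\int ff_\theta\, d\mu_V + \int f_\theta^2 d\mu_V = \Var_{\mu_V}(f - f_\theta) + \epsilon$, which is the claim. The same computation shows that the deficit $\delta$ vanishes on every $f_\theta$, as expected.

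The only delicate point is justifying the integrations by parts, which requires the boundary terms to vanish. This holds because $e^{-V}$ decays at least exponentially for convex $V$ with finite mass. We also need integrability of $|V'|^2$, $V''$ and $f f_\theta$ against $\mu_V$. I would state these as standing regularity assumptions, or obtain them by truncation and approximation; the algebra itself is routine.
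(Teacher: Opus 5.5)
Your proof is correct and follows essentially the paper's route: expand the quadratic form using $(V'')^{-1}f_\theta' = \theta$, then integrate by parts to turn the cross term $\int \langle \theta, f'\rangle \, d\mu_V$ into $\int f f_\theta \, d\mu_V$, which cancels against the cross term in $\Var_{\mu_V}(f - f_\theta)$. The only difference is minor. You verify $\int \langle \theta, V''\theta\rangle \, d\mu_V = \int f_\theta^2 \, d\mu_V$ and $\int f_\theta \, d\mu_V = 0$ directly by integration by parts, whereas the paper simply invokes that $f_\theta$ is an extremiser of \eqref{brascamp-lieb}.
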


\begin{proof}
      We add and subtract $f_\theta$ from $f$ and expand both sides. On the left hand side we have 
    \begin{align*}
        \int \inner{(V'')^{-1} f', f'}d\mu_V 
        &= \int \inner{(V'')^{-1}(f' - f_\theta'), (f' - f_\theta')}d\mu_V + \int \inner{(V'')^{-1}f_\theta', f_\theta'}d\mu_V \\
        &+ 2\int \inner{(V'')^{-1}(f' - f_\theta'), f_\theta'}d\mu_V
    \end{align*}
    while on the right hand side we have 
    \[ 
        \Var_{\mu_V}(f) + \epsilon = \Var_{\mu_V}(f - f_\theta) + 2\int f_\theta(f - f_\theta)d\mu_V + \Var_{\mu_V}(f_\theta) + \epsilon. 
    \]
    Since $f_\theta$ is an optimiser, the terms $\int \inner{(V'')^{-1}f_\theta', f_\theta'}d\mu_V$ and $\Var_{\mu_V}(f_\theta)$ cancel, and because $f_\theta' = \inner{\theta, V'}' = V''\theta$, the cross term on the left 
    \[ 
        2 \int \inner{(V'')^{-1}(f' - f_\theta'), f_\theta'}d\mu_V = 2\int \inner{f' - f_\theta', \theta}d\mu_V = 2 \int \inner{\theta, V'}(f - f_\theta)d\mu_V 
    \]
    cancels with the cross term on the right. What remains is the claim. 
\end{proof}

\begin{proof}[Proof of Theorem \ref{thm:l1+sbl=l2}]
    Applying \eqref{eq:super-brascamp-lieb} to $g=f - f_\theta$ where $\theta$ is the one appearing in \eqref{l1-stability-1}, one has
    \begin{align*}
        \int \abs{f - f_\theta}^2d\mu_V 
        &\leq 
        \delta \int \inner
        {(V'')^{-1}(f' - f_\theta'), f' - f_\theta'}d\mu_V + C_0 \left(\int \abs{f - f_\theta}d\mu_V\right)^2 \\
        &\leq     
        \delta \int \inner
        {(V'')^{-1}(f' - f_\theta'), f' - f_\theta'}d\mu_V + C_0 C_1^2\epsilon \vphantom{\delta \left(\int \abs{f - f_\theta}^2d\mu_V + \epsilon\right) + C_0C_1^2\epsilon} \\
        &\leq 
        \delta \left(\int \abs{f - f_\theta}^2d\mu_V + \epsilon\right) + C_0C_1^2\epsilon 
    \end{align*}
    by \eqref{l1-stability-1} and Lemma \ref{lem-l2-equals-h1}, from which the the first part of the theorem follows.

    For the second part of the theorem, the question essentially boils down to a matter of whether the $\theta$ of \eqref{l1-stability-1} is $\sqrt{\epsilon}$-close to $\theta^*$. We observe that
    \[ 
        \int x \inner{\theta, V'}d\mu_V = -\int x \inner{\theta,(e^{-V})'}dx = \theta 
    \] 
    and hence 
    \[ 
        \abs{\theta^* - \theta}^2 
        = \abs{\int x(f - \inner{\theta, V'})d\mu_V}^2
        \leq \int \abs{f - \inner{\theta, V'}}^2d\mu_V \bbE_{\mu_V}(\abs{x}^2)
    \]
    so applying \eqref{l2-stability-1} yields $\abs{\theta^* - \theta}^2 \leq C_2\bbE_{\mu_V}(\abs{x}^2)\epsilon$. By the triangle inequality,
    \[ 
        \int \abs{f - \inner{\theta^* , V'}}d\mu_V 
        \leq 
        \int \abs{f - \inner{\theta,V'}}d\mu_V + \abs{\theta^* - \theta} \int \abs{V'}d\mu_V
    \] 
    from which we obtain the first part of \eqref{l1-stability-2} using the previous estimate on $\abs{\theta^* - \theta}$ and \eqref{l1-stability-1} to bound $\int \abs{f - \inner{\theta,  V'}}d\mu_V$. Repeating the first part of the proof with this new estimate yields the second part of \eqref{l1-stability-2}. 
\end{proof}

\section{The super-Brascamp-Lieb inequality: examples and related inequalities}\label{sec:sbl} 

In this section we study the super-Brascamp-Lieb inequality \eqref{eq:super-brascamp-lieb} and more generally the following one-parameter family of inequalities. We say that $\mu_V$ satisfies a \emph{super-Brascamp-Lieb inequality} if there exists $s_0 \geq 1$ and a function $\beta \colon [s_0, \infty) \to [0, \infty)$ such that, for any $s \geq s_0$ and any $g \colon \bbR^n \rightarrow \bbR$ smooth
\begin{equation} \label{eq:super-brascamp-lieb-full}
    \int g^2 d\mu_V \leq \beta(s) \int \inner{(V'')^{-1}g', g'}d\mu_V + s \left(\int \abs{g}d\mu_V\right)^2.
\end{equation}
The super-Brascamp-Lieb inequality is equipped with a function $\beta$ which we may assume without loss of generality, here and in the sequel, is non-increasing. The variance Brascamp-Lieb inequality implies one can always choose $s_0 = 1$ and $\beta \equiv 1$ but in light of Theorem \ref{thm:l1+sbl=l2} we seek $s > 1$ and $\beta(s) < 1$. 

\begin{remark}
    The conventions of \cite{davies1984ultracontractivity, wang2000functional} reverse the roles of $\beta$ and $s$; we adopt the convention of \cite{barthe2006interpolated} since we will follow their proofs more closely. 
\end{remark}

As a starting point, let us first observe that \eqref{eq:super-brascamp-lieb-full} immediately follows from the super-Poincar\'e inequality introduced and studied in \cite{wang2000functional} under the condition $V'' \leq \Lambda$. We say that $\mu_V$ satisfies a \emph{super-Poincar\'e inequality} if there exists a function $\beta: [1, \infty) \rightarrow [0, \infty)$, again non-increasing, such that for any $s \geq 1$ and any $f \colon \bbR^n \rightarrow \bbR$ smooth 
\begin{equation}\label{eq:super-poincare}
    \int f^2d\mu_V \leq \beta(s) \int \abs{f'}^2d\mu_V + s \left(\int \abs{f}d\mu_V\right)^2. 
\end{equation}
It is known that the log-Sobolev inequality implies \eqref{eq:super-poincare} with $\beta$ comparable to $1/\log s$ for $s$ large. Consequently, by the Bakry-\'Emery $\Gamma_2$-condition \cite{bakry1985diffusions}, it holds if $\Lambda \geq V'' \geq \rho > 0$ in which case the log-Sobolev inequality holds with constant $1/\rho$, see for instance \cite[\S5]{ane2000inegalites}. 

However, \eqref{eq:super-poincare} can hold for measures not satisfying the log-Sobolev inequality. Indeed, the standard examples satisfying \eqref{eq:super-poincare} are exponential power type measures $d\mu_p(x) \propto e^{-\abs{x}^p} dx$, $p > 1$, see \cite[\S7.7]{bakry2013analysis} for details. Moreover, for $p \in (1, 2]$ it was proved by \cite[Proposition~9]{barthe2007isoperimetry} that $\mu_p$ and their products $\mu_p^{\otimes n}$ satisfy \eqref{eq:super-poincare} with $\beta = \beta_p$ comparable to $C_p/(\log s)^{2(1-1/p)}$ for $s$ large and $C_p$ independent of $n$. 

To provide the reader intuition as to why one might expect validity of \eqref{eq:super-brascamp-lieb-full}, we note the double exponential measure $d\mu_1$ is a threshold in the sense it satisfies the Poincar\'e inequality but not \eqref{eq:super-poincare}. Thus it seems reasonable to expect there is a threshold log-concave measure satisfying the Brascamp-Lieb inequality \eqref{brascamp-lieb} but not \eqref{eq:super-brascamp-lieb-full} which may be associated with a threshold notion of convexity. In any case, we will prove in the following proposition and also in Proposition \ref{prop:bobkov-ledoux} the exponential power type measures $\mu_p$ satisfy \eqref{eq:super-brascamp-lieb-full} for $p > 1$ in a suitable sense, which might be contrasted with the fact that they also satisfy \eqref{eq:super-poincare}.

\begin{proposition}
\label{Proposition2}
    Assume $0 < V'' \leq \Lambda$ almost everywhere on $\bbR^n$ and assume $\mu_V$ satisfies the super-Poincar\'e inequality \eqref{eq:super-poincare} with $\beta = \beta_0$. Then $\mu_V$ satisfies the super-Brascamp-Lieb inequality \eqref{eq:super-brascamp-lieb-full} with $\beta = \Lambda \beta_0$. 
\end{proposition}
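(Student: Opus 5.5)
The plan is a direct pointwise comparison of the two energies, followed by substitution into the super-Poincar\'e inequality. The key matrix fact is the following. Since $V''(x)$ is symmetric positive definite with $V''(x) \leq \Lambda$ (in the sense of quadratic forms) for almost every $x$, the inverse satisfies $(V''(x))^{-1} \geq \Lambda^{-1} I$. Indeed, diagonalising $V''(x)$ in an orthonormal eigenbasis, every eigenvalue $\lambda_i(x) \in (0, \Lambda]$, so every eigenvalue of the inverse is $\lambda_i(x)^{-1} \geq \Lambda^{-1}$. Hence for every vector $v \in \bbR^n$,
\[
    \abs{v}^2 \leq \Lambda \inner{(V''(x))^{-1} v, v} \quad \text{for almost every } x.
\]

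Next, I apply this with $v = g'(x)$ for a smooth $g$ and integrate against $\mu_V$. Since the exceptional set is Lebesgue-null and $\mu_V$ is absolutely continuous, it is also $\mu_V$-null. This gives
\[
    \int \abs{g'}^2 d\mu_V \leq \Lambda \int \inner{(V'')^{-1}g', g'} d\mu_V .
\]

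Now fix $s \geq 1$ and apply \eqref{eq:super-poincare} to $g$ with $\beta = \beta_0$. Because $\beta_0(s) \geq 0$, I can bound the gradient term using the previous display, which yields
\[
    \int g^2 d\mu_V \leq \beta_0(s)\int \abs{g'}^2 d\mu_V + s\left(\int\abs{g}d\mu_V\right)^2 \leq \Lambda\beta_0(s) \int \inner{(V'')^{-1}g', g'} d\mu_V + s\left(\int\abs{g}d\mu_V\right)^2 .
\]
This is exactly \eqref{eq:super-brascamp-lieb-full} with $s_0 = 1$ and $\beta = \Lambda\beta_0$. The function $\Lambda\beta_0$ is non-increasing and nonnegative, as required.

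There is no genuine obstacle here. The only points needing care are the almost-everywhere hypothesis, which is handled by absolute continuity of $\mu_V$ as above, and the possibility that the right-hand side is infinite, in which case the inequality is trivially true.
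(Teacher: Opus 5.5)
Your proof is correct and is essentially the paper's argument. The paper likewise uses $\Lambda \geq V'' > 0$ to get $\abs{f'}^2 \leq \Lambda\inner{(V'')^{-1}f', f'}$ and substitutes this into the super-Poincar\'e inequality; you additionally spell out the eigenvalue justification and the almost-everywhere issue, which the paper leaves implicit.
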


\begin{proof}
    If $\Lambda \geq V'' > 0$ then \eqref{eq:super-poincare} implies
    \[ \int f^2d\mu_V \leq \Lambda \beta(s) \int \inner{(V'')^{-1}f', f'}d\mu_V + s\left(\int \abs{f}d\mu_V\right)^2 \]
    which is \eqref{eq:super-brascamp-lieb-full}. 
\end{proof}

\begin{example}
    While the potentials $V(x) = \abs{x}^p$, $p \in (1, 2)$, defined on the line (for simplicity) do not satisfy $V'' \leq \Lambda$ at zero, they can be regularised according to \cite[\S3]{barthe2006interpolated} near zero to be $\cC^2$, convex, and bounded below by a positive constant depending on $p$. Moreover, the regularisation has tails like $e^{-\abs{x}^p}$ and so $V'' \rightarrow 0$ at infinity which implies $\Lambda \geq V''$.
\end{example}

\subsection{Additive $\varphi$-Brascamp-Lieb inequalities}

As for generalisations of the log-Sobolev inequality (see the introduction of \cite[\S5]{barthe2006interpolated} for an account), there are numerous ways to generalise the variance Brascamp-Lieb inequality. We shall focus on one specific inequality defined in analogy with the additive $\varphi$-Sobolev inequality which was introduced and studied in \cite{barthe2006interpolated}. 

We say that $\mu_V$ satisfies the \emph{additive $\varphi$-Brascamp-Lieb inequality} if there exists a finite constant $C_\varphi$ such that for any $g \colon \bbR^n \rightarrow \bbR$ smooth, 
\begin{equation}\label{eq:additive-brascamp-lieb}
  \int g^2 \varphi(g^2) d\mu_V
  - \int g^2 d\mu_V\varphi \left( \int g^2 d\mu_V \right) \leq C_\varphi \int \inner{(V'')^{-1}\nabla g, \nabla g}d\mu_V .
\end{equation}
For simplicity, we assume throughout the paper, here and in the sequel, that  $\varphi \colon (0, \infty) \rightarrow \bbR$ is strictly increasing and satisfies $\lim_{x \to \infty} \varphi(x) = \infty$. For technical reasons, we also assume $\lim_{x \to 0} x\varphi(x) = 0$ and $[0,\infty) \sbs \varphi((0,\infty))$. In particular, the inverse function $\varphi_+^{-1}: [0, \infty) \rightarrow (0, \infty)$ is increasing. 

The special case $\varphi = \log$ realising the entropy of $g^2$ on the left hand side of \eqref{eq:additive-brascamp-lieb} was considered by Bobkov and Ledoux \cite{bobkov2000brunn} who called the corresponding inequality the \emph{entropic Brascamp-Lieb inequality}. Furthermore, the authors gave a sufficient condition for its validity (see below). Other choices of $\varphi$ which have appeared in the literature in other contexts is the family $\varphi_\beta(x) = \log(1 + x)^\beta$, $\beta \in (0, 1]$.  

The main goal of this subsection is to prove the equivalence between super-Brascamp-Lieb and additive $\varphi$-Brascamp-Lieb inequalities, under mild assumptions on $\varphi$ and $\beta$. We anticipate that, modulo constants, heuristically $\varphi$ and $\beta$ are related through the relation $\varphi = 1/\beta$. 

However, we will encounter some technical difficulties since, for instance, $\varphi$ is defined on $(0, \infty)$ and $\beta$ on $[s_0, \infty)$ only, meaning we will need to carefully extend functions to the positive axis. Note also that the additive $\varphi$-Brascamp-Lieb inequality is tight, meaning it achieves equality for constant functions, while the super-Brascamp-Lieb inequality is not. 

\begin{proposition} \label{prop:equivalence}
The following holds. 
\begin{itemize}
    \item[(i)]
    Assume the additive $\varphi$-Brascamp-Lieb inequality \eqref{eq:additive-brascamp-lieb}. Then the super-Brascamp-Lieb inequality \eqref{eq:super-brascamp-lieb-full} holds with 
    $\beta(s) \coloneq \frac{4C_\varphi}{\varphi_+(s/4)}$ for $s \geq s_0$ with $s_0 = 4\varphi_+^{-1}(2D)$ where $D = \varphi(1) + \sup_{x: \varphi(x) < 0} x(-\varphi(x))$.
    \item[(ii)]
    Assume the super-Brascamp-Lieb inequality \eqref{eq:super-brascamp-lieb-full}. Let $\varphi(x) = \frac{1}{\beta(x)}$ for $x \geq s_0$ and suppose we may define $\varphi$ such that 
    \begin{enumerate}
        \item $\varphi$ is strictly increasing, $\cC^1$ on $(0, \infty)$, concave, and 
        \item there exists $\gamma > 0$ and $M \geq -\varphi(8)$ such that for all $x, y > 0$, one has $x \varphi'(x) \leq \gamma$ and $\varphi(xy) \leq M + \varphi(x) + \varphi(y)$. 
    \end{enumerate}
    Then the additive $\varphi$-Brascamp-Lieb inequality \eqref{eq:additive-brascamp-lieb} holds with constant 
    \[ C_\varphi = \frac{8}{(\sqrt{2} - 1)^2}(1 + \beta(s_0)(\varphi(8) + M)) + 2\gamma(1 + \sqrt{8s_0})^2. \] 
\end{itemize}
\end{proposition}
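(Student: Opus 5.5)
We prove the two directions separately, following the arguments of \cite{barthe2006interpolated} for additive $\varphi$-Sobolev versus super-Poincar\'e inequalities. The point is that the Brascamp-Lieb energy $\cE(g) \coloneq \int \inner{(V'')^{-1}g', g'}d\mu_V$ shares the two structural properties those proofs use. First, it is a quadratic form in $g'$, so it satisfies $\cE(\abs{g}) \leq \cE(g)$ almost everywhere. Second, it is monotone under truncation and contractions: if $h = \psi(g)$ with $\abs{\psi'} \leq 1$, then $\cE(h) \leq \cE(g)$, and in fact the energy localises onto the set where $\psi'\neq 0$. Nothing else about $\abs{\nabla g}^2$ is used there, so the proofs transfer verbatim once these facts are checked.

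For (i), we normalise by homogeneity so that $\int \abs{g}d\mu_V = 1$; we may take $g \geq 0$ by replacing $g$ with $\abs{g}$. Write $m = \int g^2 d\mu_V$; if $m \leq s$ there is nothing to prove. Otherwise we split the integral over $\{g^2 \geq s/4\}$ and $\{g^2 < s/4\}$.
\begin{itemize}
\item On the small set, $\int_{g^2<s/4} g^2 d\mu_V \leq \frac{\sqrt s}{2}\int g\, d\mu_V = \frac{\sqrt{s}}{2}$.
\item On the large set, since $\varphi$ is increasing, $\varphi_+(s/4)\int_{g^2\geq s/4} g^2 d\mu_V \leq \int g^2\varphi(g^2)d\mu_V + \int_{\varphi(g^2)<0} g^2(-\varphi(g^2))d\mu_V$. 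The negative part is at most $\sup_{\varphi(x)<0} x(-\varphi(x))$.
\end{itemize}
We then apply \eqref{eq:additive-brascamp-lieb}, which bounds $\int g^2\varphi(g^2)$ by $C_\varphi\cE(g) + m\varphi(m)$. The term $m\varphi(m)$ must be absorbed. Instead of applying the inequality to $g$ directly, we apply it to the normalised function $g/\sqrt{m}$, which has $L^2$ norm $1$, so that the subtracted term becomes $\varphi(1)$. This produces
\[
\int g^2\varphi(g^2/m)\,d\mu_V \leq C_\varphi \cE(g) + m\varphi(1).
\]
On the set $\{g^2 \geq s/4\}$ we would like $\varphi(g^2/m) \gtrsim \varphi_+(s/4)$, but this fails when $m$ is large. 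The plan is therefore to compare levels with the cutoff $s/4$ more carefully, exactly as in the Barthe--Cattiaux--Roberto argument:
\begin{itemize}
\item truncate at level $\sqrt{s}/2$ using $h = (g-\sqrt s/2)_+$, which satisfies $\cE(h)\leq\cE(g)$ and $\int h \leq 1$;
\item apply the inequality to $h$ and use $s\geq s_0 = 4\varphi_+^{-1}(2D)$, so that $\varphi_+(s/4) \geq 2D$ absorbs the constant $D$ terms into half of the left-hand side.
\end{itemize}
This is where $D$ enters, and collecting constants should give $\beta(s) = 4C_\varphi/\varphi_+(s/4)$. The delicate step is the bookkeeping with $m$; the level-set decomposition in the style of Chebyshev is the tool we would use first.

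For (ii), we decompose $g \geq 0$ dyadically, as in \cite{barthe2006interpolated}: set $g_k = \min\bigl((g-2^{k/2})_+,\, 2^{k/2}(\sqrt2-1)\bigr)$ on the level bands $\Omega_k = \{2^k\leq g^2 < 2^{k+1}\}$, normalised by $\int g^2 d\mu_V = 1$. On $\Omega_{k+1}$ we have $g_k \geq c\,2^{k/2}$, which is where the constant $(\sqrt2-1)^2$ comes from. The argument then runs as follows.
\begin{itemize}
\item Apply \eqref{eq:super-brascamp-lieb-full} to each $g_k$ with $s_k$ chosen so that $s_k\, \mu(g^2 \geq 2^k) \approx 1$. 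Markov's inequality gives $(\int g_k)^2 \leq \mu(g^2\geq 2^k)\int g_k^2$, and the $s_k(\int g_k)^2$ term is absorbed when $s_k \mu(g^2\geq 2^k) \leq 1/2$.
\item The $\beta(s_k) = 1/\varphi(s_k)$ factor, combined with the subadditivity $\varphi(xy)\leq M+\varphi(x)+\varphi(y)$ and the bound $\varphi(g^2) \lesssim \varphi(2^k)$ on $\Omega_k$, converts $\sum_k 2^k\mu(\Omega_{k+1})\varphi(2^k)$ into $\sum_k \cE(g_k) \leq \cE(g)$, since the $g_k$ have disjointly supported gradients.
\item Levels with $2^k \lesssim s_0$ are handled separately by the plain variance Brascamp-Lieb inequality together with $x\varphi'(x)\leq\gamma$. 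Using concavity, the deficit $\int g^2\varphi(g^2) - \varphi(1)$ near the mean is controlled by $\gamma\Var(g)$, which is the origin of the term $2\gamma(1+\sqrt{8s_0})^2$.
\end{itemize}
We expect (ii) to be the main obstacle, specifically matching the constant and organising the low levels, where the tightness at constants requires a separate treatment: writing $g = \bar g + (g - \bar g)$ and comparing with $\Var(g)$.
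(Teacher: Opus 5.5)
\textbf{Part (ii)} follows the paper's argument in outline. The ingredients match: dyadic cut-offs with disjointly supported gradients; absorbing the $s(\int g_k)^2$ term by Cauchy--Schwarz/Markov with $s$ of order $2^k$; using $\beta\varphi=1$ together with $\varphi(8x)\le\varphi(x)+\varphi(8)+M$; and treating the levels $g^2\le 8s_0$ by concavity, $x\varphi'(x)\le\gamma$ and $\int(\abs{g}-\norm{g}_2)^2d\mu_V\le 2\Var_{\mu_V}(g)$.

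\textbf{Part (i)} has a genuine gap. You correctly reduce to $\int g^2\varphi_+(g^2/m)\,d\mu_V\le C_\varphi\cE(g)+Dm$, with $m=\int g^2d\mu_V$ and $\int\abs{g}d\mu_V=1$. You also correctly see that a cutoff at the absolute level $s/4$ does not match the relative argument $g^2/m$. But your proposed fix does not remove the problem. Applying the inequality to $h=(g-\sqrt{s}/2)_+$ reintroduces the same normalisation, now by $m_h=\int h^2d\mu_V$. If you exploit the truncation through $\mu_V(h>0)\le 2/\sqrt{s}$ (Markov), you only learn that $h^2/m_h\gtrsim\sqrt{s}$ on at least half of the $L^2$-mass of $h$. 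That route gives $\varphi_+$ evaluated at scale $\sqrt{s}$, not $\varphi_+(s/4)$. So the stated $\beta(s)=4C_\varphi/\varphi_+(s/4)$ and $s_0=4\varphi_+^{-1}(2D)$ do not follow from ``collecting constants''.

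The missing idea is to cut at a level proportional to $m$ and then solve a quadratic inequality in $\sqrt{m}$. For $\lambda>0$ one has $\int_{\{g^2<\lambda m\}}g^2\,d\mu_V\le\sqrt{\lambda m}\int\abs{g}\,d\mu_V=\sqrt{\lambda m}$. Setting $t=\varphi_+(\lambda)$, this gives
\[
(t-D)\,m-t\sqrt{\lambda}\,\sqrt{m}\le C_\varphi\,\cE(g).
\]
This is exactly what the paper gets from the pointwise inequality $rt-t\sqrt{a\varphi_+^{-1}(t)}\le r\varphi_+(r^2/a)$, applied with $r=\abs{f}$ and integrated against $\abs{f}$. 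Solve for $\sqrt{m}$ and use $(\sqrt{\alpha}+\sqrt{\alpha+\kappa})^2\le 4\alpha+2\kappa$ to get
\[
m\le\frac{2C_\varphi}{t-D}\,\cE(g)+\frac{t^2\lambda}{(t-D)^2}.
\]
For $t\ge 2D$ the right-hand side is at most $\frac{4C_\varphi}{t}\cE(g)+4\lambda$. Setting $s=4\lambda$ then gives exactly $\beta(s)=4C_\varphi/\varphi_+(s/4)$ for $s\ge s_0$. No truncation is needed in (i). Contrary to your expectation, part (ii) is the sound part of your sketch, and part (i) is where the missing step lies.
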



\begin{proof}
The proof borrows different elements from \cite{wang2000functional} and \cite{barthe2006interpolated, barthe2007isoperimetry}. 

We first prove Item $(i)$. Let $f$ be such that $\int \abs{f} d\mu_V = 1$. Assume $\abs{f} \geq \varepsilon$ for some $\varepsilon > 0$ and set $a = \int f^2d\mu$ and $b = \int \inner{(V'')^{-1}f', f'} d\mu_V$. Applying \eqref{eq:additive-brascamp-lieb} to $g^2 = f^2/\int f^2d\mu_V$ yields $\int f^2 \varphi(\frac{f^2}{a}) d\mu_V \leq C_\varphi b + \varphi(1)a$. Consequently, with $D$ given by the proposition, one has 
\begin{equation} \label{eq:start}
    \int f^2 \varphi_+\left(\frac{f^2}{a}\right) d\mu_V \leq C_\varphi b + Da. 
\end{equation}
Considering the two cases $t \leq \varphi_+(r^2/2)$ and $t \geq \varphi_+(r^2/2)$, it is readily checked
\[ 
    rt - t\sqrt{a\varphi_+^{-1}(t)} \leq r\varphi_+(r^2/a), \quad r \geq 0 \Mand t > 0. 
\] 
Inserting $r = \abs{f}$ and multiplying by $\abs{f}$, we obtain after integration
\[ 
    ta - t\sqrt{a\varphi_+^{-1}(t)} \leq \int f^2\varphi_+\left(\frac{f^2}{a}\right) d\mu_V, \quad t > 0.
\]
Hence, by \eqref{eq:start}, we have
\[
    (t - D)a - t \sqrt{a\varphi_+^{-1}(t)} - C_\varphi b \leq 0
\]
and therefore
\[
    \sqrt{a} \leq \frac{t\sqrt{\varphi_+^{-1}(t)}+\sqrt{t^2\varphi_+^{-1}(t)^2 + 4C_\varphi(t - D)b}}{2(t - D)}, \quad t > D.
\]
Squaring and since $(\sqrt{\alpha} + \sqrt{\alpha + \gamma})^2 \leq 4\alpha + 2\gamma$, we may remove the assumption $\int \abs{f}d\mu_V = 1$ by homogeneity and conclude
\[
    \int f^2 d\mu_V \leq \frac{2C_\varphi}{t-D} \int \inner{(V'')^{-1}f', f'} d\mu + \frac{t^2\varphi_+^{-1}(t)}{(t - D)^2} \left(\int \abs{f}d\mu_V\right)^2, \quad t > D.
\]
Restricting to $t \geq 2D$, we observe that
\[ 
    \frac{t^2\varphi_+^{-1}(t)}{(t - D)^2} \leq 4 \varphi_+^{-1}(t) \quad \Mand \quad \frac{2C_\varphi}{t - D} \leq \frac{4C_\varphi}{t}. 
\] 
Performing the change of variable $s = 4\varphi_+^{-1}(t)$, we obtain the desired conclusion of Item $(i)$ after removing the assumption $\abs{f} \geq \varepsilon$ by approximation. 

We now prove Item $(ii)$. Let $f$ be such that $\int f^2d\mu_V = 1$. For $n \geq 0$ set the dyadic level sets $A_n \coloneq \{2^n \leq f^2 \leq 2^{n+1}\}$ and define 
\[ 
    f_n \coloneq \min \left(1, \left(\frac{\abs{f} - \sqrt{2^{n-1}}}{\sqrt{2^n} - \sqrt{2^{n-1}}}\right)_+ \right)
\]
interpolating $0$ on $\{f^2 \leq 2^{n-1}\}$ and $1$ on $\{f^2 \geq 2^n\}$. By Markov's inequality,
\[ 
    \left(\int f_nd\mu_V\right)^2 \leq \mu_V\left\{f^2 \geq 2^{n-1}\right\} \int f_n^2d\mu_V \leq \frac{1}{2^{n-1}}\int f_n^2d\mu_V. 
\]
Hence, applying \eqref{eq:super-brascamp-lieb-full} to $f_n$ implies for $s \geq s_0$ that
\begin{align*}
    \int f_n^2d\mu_V 
    &\leq 
    \frac{\beta(s)}{(\sqrt{2^n} - \sqrt{2^{n-1}})^2} \cE_{V, n}(f) + s \left(\int f_n d\mu_V\right)^2 \\
    &\leq 
    \frac{\beta(s)}{2^{n-1}(\sqrt{2}-1)^2} \cE_{V, n}(f)  + \frac{s}{2^{n-1}} \int f_n^2 d\mu_V.
    \vphantom{\frac{\beta(s)}{(\sqrt{2^n} - \sqrt{2^{n-1}})^2} \cE_{V, n}(f) + s \left(\int f_n d\mu_V\right)^2}
\end{align*}
where $\cE_{V, n}(f) \coloneq \int_{A_n} \inner{(V'')^{-1}f', f'}d\mu_V \leq \int \inner{(V'')^{-1}f', f'}d\mu_V \eqqcolon \cE_V(f)$. Therefore, choosing $s = 2^{n-2}$, we have
\[ 
    \int f_n^2d\mu_V \leq \frac{\beta(2^{n-2})}{2^{n-2}(\sqrt{2}-1)^2} \cE_{V, n}(f). 
\] 
This is valid only if $2^{n-2} \geq s_0$. Let $n_0$ be such that $2^{n_0-3} < s_0 \leq 2^{n_0-2}$. Since $\int f_n^2 d\mu_V \geq \mu_V(f^2 \geq 2^n)$, we obtain
\begin{equation} \label{eq:n}
    2^{n+1} \mu_V(f^2 \geq 2^n) \leq \frac{8\beta(2^{n-2})}{(\sqrt{2}-1)^2} \cE_{V, n}(f), \quad n \geq n_0.   
\end{equation}
With $\varphi$ given by the proposition, set $\Phi_t(x) = x(\varphi(x) - \varphi(t)) - t\varphi'(t)(x - t)$ for $x, t > 0$. For $t = \norm{f}_2 = 1$, we observe that $\int f^2\varphi(f^2)d\mu_V - \int f^2d\mu_V\varphi(\int f^2d\mu_V)$ rewrites as
\[ 
    \int_{\{f^2 \leq 8s_0\}} \Phi_t(f^2)d\mu_V + \int_{\{f^2 > 8s_0\}} \Phi_t(f^2)d\mu_V. 
\]
For the second integral, since $\varphi$ is increasing by Assumption $(2)$, we find
\begin{align*}
    \int_{\{f^2 > 8s_0\}} \Phi_t(f^2)
    &\leq 
    \int_{\{f^2 > 8s_0\}} f^2\left( \varphi(f^2) - \varphi\left(\int f^2d\mu_V\right) \right) d\mu_V \\
    &\leq 
    \int_{\{f^2 > 8s_0\}} f^2\left( \varphi\left(\frac{f^2}{\int f^2 d\mu_V}\right) + M \right) d\mu_V \\
    &= \int f^2 F\left(\frac{f^2}{\int f^2 d\mu_V}\right) d\mu_V 
\end{align*}
where we have set $F(x) = \varphi(x) + M$ for $x > 8s_0$ and $F(x) = 0$ for $x \in [0, 8s_0]$. Using the sets $A_n$ introduced above, we have by \eqref{eq:n}
\begin{align*}
    \int f^2 F\left(\frac{f^2}{\int f^2 d\mu_V}\right) d\mu_V 
    &\leq 
    \sum_{n=n_0}^\infty \int_{A_n} f^2 F\left(\frac{f^2}{\int f^2 d\mu_V}\right) d\mu_V \vphantom{\frac{8}{(\sqrt{2} - 1)^2} \sum_{n = n_0}^\infty \beta(2^{n-2}) F(2^{n + 1}) \cE_{V, n}(f)} \\
    &\leq \sum_{n = n_0}^\infty 2^{n + 1}F(2^{n + 1}) \mu(f^2 \geq 2^n) \vphantom{\frac{8}{(\sqrt{2} - 1)^2} \sum_{n = n_0}^\infty \beta(2^{n-2}) F(2^{n + 1}) \cE_{V, n}(f)} \\
    &\leq \frac{8}{(\sqrt{2} - 1)^2} \sum_{n = n_0}^\infty \beta(2^{n-2}) F(2^{n + 1}) \cE_{V, n}(f).
\end{align*}
For all $x > s_0$, by Assumption $(2)$ applied to $y = 8$ one has
\begin{align*}
    \beta(x)F(8x) = \beta(x)(\varphi(8x) + M) 
    &\leq \beta(x)(\varphi(x) + \varphi(8) + 2M) \\
    &\leq 1 + \beta(s_0)(\varphi(8) + M) 
\end{align*} 
which, for $n \geq n_0$ and $x = 2^{n-2} \geq s_0$, implies that
\begin{align*}
    \int f^2 F\left(\frac{f^2}{\int f^2 d\mu_V}\right) d\mu_V 
    &\leq \frac{8}{(\sqrt{2} - 1)^2}(1 + \beta(s_0)(\varphi(8) + M)) \sum_{n = n_0}^\infty \cE_{V, n}(f) \\
    &\leq \frac{8}{(\sqrt{2} - 1)^2}(1 + \beta(s_0)(\varphi(8) + M)) \cE_V(f).
\end{align*}
For the first term $\int_{\{f^2 \leq 8s_0\}} \Phi_t(f^2)d\mu_V$, we observe that $\varphi(x^2) \leq \varphi(t^2) + \varphi'(t^2)(x^2 - t^2)$ by concavity of $\varphi$ so that
\[ 
    \Phi_t^2(x^2) \leq \varphi'(t^2)(x^2 - t^2)^2 = (x - t)^2\varphi'(t^2)(x + t)^2. 
\]
In particular, for $x = \abs{f}$ and $t = \norm{f}_2 = 1$, when $f^2 \leq 8s_0$, it holds
\begin{align*}
    \int_{\{f^2 \leq 8s_0\}} \Phi_t(f^2)
    &\leq 
    \int_{\{f^2 \leq 8s_0\}} (\abs{f}-t)^2 \varphi'(t^2) (1 + \sqrt{8s_0})^2 t^2 d\mu_V \vphantom{\gamma(1 + \sqrt{8s_0})^2 \int \left(\abs{f} - \norm{f}_2 \right)^2 d\mu_V} \\
    &\leq 
    \gamma(1 + \sqrt{8s_0})^2 \int \left(\abs{f} - \norm{f}_2 \right)^2 d\mu_V.
\end{align*}
Now,
\[ 
    \int \left(\abs{f} - \norm{f}_2 \right)^2 d\mu_V 
    = 2\left( \int f^2 d\mu_V - \int \abs{f}d\mu_V \norm{f}_2 \right). 
\]
Changing $f$ into $-f$ if necessary, we may assume without loss of generality that $\int fd\mu_V \geq 0$. By Cauchy-Schwarz, we deduce that $\left( \int fd\mu_V \right)^2 \leq \int \abs{f}d\mu_V \norm{f}_2$ so that, by the variance Brascamp-Lieb inequality \eqref{brascamp-lieb}
\[ 
    \int \left(\abs{f} - \norm{f}_2\right)^2d\mu_V \leq 2\Var_{\mu_V}(f) \leq 2\int \inner{(V'')^{-1}f', f'}d\mu_V. 
\]
Summing the two different estimates, we arrive at
\[ 
    \int f^2 \varphi(f^2)d\mu_V  - \int f^2 d\mu_V \varphi \left( \int f^2 d\mu_V \right) \leq C_\varphi \int \inner{(V'')^{-1}f', f'} d\mu_V 
\]
with $C_\varphi$ given by the proposition.
\end{proof}

\subsection{Tensorisation property}

Although the super-Brascamp-Lieb inequality does not tensorise independently of the dimension, the additive $\varphi$-Brascamp-Lieb inequality does, as we show. According to Proposition \ref{prop:equivalence}, the tensorisation property transfers to the super-Brascamp-Lieb inequality under mild assumptions on $\varphi, \beta$.

\begin{proposition} \label{prop:tensorisation}
Assume $\mu_{V_1}, \dots, \mu_{V_n}$ are probability measures on $\bbR$ satisfying the additive $\varphi$-Brascamp-Lieb inequality \eqref{eq:additive-brascamp-lieb} with functions $\varphi_i \equiv \varphi$ and constants $C_1, \dots, C_n$ respectively. Then the product measure $\mu_{V_1} \otimes \dots \otimes \mu_{V_n}$ on $\bbR^n$ satisfies \eqref{eq:additive-brascamp-lieb} with function $\varphi$ and constant $\max(C_1,\dots,C_n)$.
\end{proposition}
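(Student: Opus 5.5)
The plan is the standard subadditivity argument for $\varphi$-entropies. Write $\Ent^\varphi_\mu(h) \coloneq \int h\varphi(h)d\mu - \int h\,d\mu\,\varphi(\int h\,d\mu)$ for $h \geq 0$, so that the left hand side of \eqref{eq:additive-brascamp-lieb} is $\Ent^\varphi_\mu(g^2)$. For the product measure $\mu = \mu_{V_1}\otimes\dots\otimes\mu_{V_n}$ the potential is $V(x) = \sum_i V_i(x_i)$, so $V''$ is diagonal with entries $V_i''(x_i)$. Hence
\[
    \int \inner{(V'')^{-1}\nabla g, \nabla g}d\mu = \sum_{i=1}^n \int \frac{(\partial_i g)^2}{V_i''(x_i)}d\mu.
\]
It therefore suffices to prove the subadditivity estimate
\[
    \Ent^\varphi_\mu(g^2) \leq \sum_{i=1}^n \int \Ent^\varphi_{\mu_{V_i}}\bigl(g^2 \text{ in the variable } x_i\bigr)d\mu.
\]
Applying the one-dimensional inequality in each coordinate, with the other variables frozen, bounds the $i$-th term by $C_i \int (\partial_i g)^2/V_i'' \, d\mu$, and then by $\max_i C_i$ times the $i$-th summand above. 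Summing over $i$ gives the claim.

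The subadditivity estimate itself is proved by induction on $n$, so the core is the case of two factors, $\mu = \nu_1\otimes\nu_2$. Write $h = g^2$ and $H(x_2) = \int h(\cdot,x_2)d\nu_1$. Then
\[
    \Ent^\varphi_\mu(h) = \int \Ent^\varphi_{\nu_1}(h(\cdot,x_2))d\nu_2(x_2) + \Ent^\varphi_{\nu_2}(H),
\]
which is an exact identity obtained by adding and subtracting $\int H\varphi(H)d\nu_2$. It remains to show
\[
    \Ent^\varphi_{\nu_2}(H) \leq \int \Ent^\varphi_{\nu_2}(h(x_1,\cdot))d\nu_1(x_1).
\]
This is the convexity (Jensen-type) property of the functional $h \mapsto \Ent^\varphi_{\nu}(h)$ under averaging in the other variable. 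It holds as soon as $\Ent^\varphi_\nu$ is convex on nonnegative functions. In turn, by the classical criterion (Latała--Oleszkiewicz; Chafaï), this is equivalent to $\Phi(x) = x\varphi(x)$ being convex with $1/\Phi''$ concave. Convexity plus $1$-homogeneity of $\Ent^\varphi$ is not available in general, so we rely on this variational characterisation instead.

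The main obstacle is precisely this step: the statement only assumes $\varphi$ is increasing, with some limits, and such a $\varphi$ need not give a subadditive $\varphi$-entropy. My first attempt would be to follow \cite{barthe2006interpolated}, who face the same issue for additive $\varphi$-Sobolev inequalities. They establish tensorisation using the variational formula
\[
    \int h\varphi(h) - \int h\,\varphi\Bigl(\int h\Bigr) = \sup\Bigl\{\int h\,\psi \Bigr\}
\]
over a suitable class of test functions $\psi$ determined by $\varphi$. Such a formula linearises $\Ent^\varphi$ in $h$ and makes the Jensen step immediate. Failing a fully general version, I would state the needed hypothesis explicitly, namely $x\mapsto x\varphi(x)$ convex with $1/(x\varphi(x))''$ concave. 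This holds for $\varphi = \log$ and for $\varphi_\beta = \log(1+x)^\beta$ in the relevant range, and I would carry out the induction under it. The induction from two factors to $n$ factors is then routine: group $\mu_{V_1}\otimes\dots\otimes\mu_{V_{n-1}}$ as one factor.
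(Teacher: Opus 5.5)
Your proposal has a genuine gap. As you acknowledge yourself, your route only closes under an extra hypothesis: $\Phi(x)=x\varphi(x)$ convex with $1/\Phi''$ concave. The proposition does not assume this. For a general increasing $\varphi$, the subadditivity estimate you reduce to ("it therefore suffices to prove\dots") can fail, so the reduction is to a possibly false statement. As written, you prove a weaker proposition, not the one stated. Your claim that the extra hypothesis holds for $\varphi_\beta$ is also left unchecked. Your recollection that \cite{barthe2006interpolated} get tensorisation from a variational formula is not what is needed here either.

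The missing idea is that you never need to compare $\Ent^\varphi_{\nu_2}(H)$ with an average of one-dimensional $\varphi$-entropies. Keep your exact identity
\[
\Ent^\varphi_\mu(g^2) = \int \Ent^\varphi_{\nu_1}\bigl(g^2(\cdot,x_2)\bigr)\,d\nu_2(x_2) + \Ent^\varphi_{\nu_2}(H),
\]
and bound the first term by the inequality for $\nu_1$ at fixed $x_2$.

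For the second term, note that $H=G^2$ with $G(x_2)=\bigl(\int g^2(x_1,x_2)\,d\nu_1(x_1)\bigr)^{1/2}$. So the additive inequality for $\nu_2$ applies directly to $G$:
\[
\Ent^\varphi_{\nu_2}(G^2)\leq C_2\int \frac{\abs{\partial_2 G}^2}{V_2''}\,d\nu_2 .
\]
By Cauchy--Schwarz,
\[
\abs{\partial_2 G}^2=\frac{\bigl(\int g\,\partial_2 g\,d\nu_1\bigr)^2}{\int g^2\,d\nu_1}\leq \int \abs{\partial_2 g}^2\,d\nu_1 .
\]
Since $V_2''$ depends only on $x_2$, this gives $\Ent^\varphi_{\nu_2}(H)\leq C_2\int \abs{\partial_2 g}^2/V_2''\,d\mu$.

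Summing the two bounds yields the claim with constant $\max(C_1,C_2)$ and no structural assumption on $\varphi$, modulo a routine approximation where $H$ vanishes. The induction then proceeds as you describe, with the first factor allowed to be multi-dimensional. This is exactly the paper's proof. The additive form, which controls the $\varphi$-entropy of $g^2$ by the energy of $g$, makes the marginal $\sqrt{\int g^2\,d\nu_1}$ an admissible test function with controlled energy. That is why no convexity of the $\varphi$-entropy functional is needed.
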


\begin{proof}
The proof is by induction and the inductive step is essentially the same as proving the base case $n = 2$. To simplify the presentation we only consider the product of two one-dimensional probability measures $\mu_{V_1}, \mu_{V_2}$. Let $V(x_1, x_2) = V_1(x_1) + V_2(x_2)$ be the potential underlying the product measure $\mu = \mu_{V_1} \otimes \mu_{V_2}$. Let $\Phi(x) = x\varphi(x)$ and set $\cE_{V, i}(g) = \int_\bbR (V_i'')^{-1}\abs{\partial_ig}^2d\mu_{V_i}$, $i = 1, 2$. Then
\begin{align*}
    \int_{\bbR^2} \Phi(g^2) d\mu_V
    &= \int_{\bbR} \left( \int_{\bbR} \Phi(g^2) d\mu_{V_1} \right) d\mu_{V_2} \\
    &\leq \int_{\bbR} \left( \Phi \left( \int_{\bbR} g^2 d\mu_{V_1}\right) + C_1 \cE_{V, 1}(g)\right) d\mu_{V_2}.
\end{align*}
Applying \eqref{eq:additive-brascamp-lieb} for $\mu_{V_2}$ to $G(x_2) = \left(\int_{\bbR}
g^2(x_1, x_2) d\mu_{V_1}(x_1) \right)^{1/2}$, we have
\[ 
    \int_\bbR \Phi \left( \int_{\bbR} g^2 d\mu_{V_1}\right) d\mu_{V_2} = \int \Phi(G^2)d\mu_{V_2} \leq \Phi\left(\int_\bbR G^2d\mu_{V_2}\right) + C_2 \cE_{V, 2}(G). 
\]
Since $\int_{\bbR} G^2 d\mu_{V_2} = \int_{\bbR^{2}} g^2 d(\mu_{V_1} \otimes \mu_{V_2})$ we obtain
\[ 
    \int_{\bbR^2} \Phi(g^2)d\mu_V - \Phi\left(\int_{\bbR^2} g^2d\mu_V\right) \leq \int_{\bbR} C_1 \cE_{V, 1}(g)d\mu_{V_2} + C_2 \cE_{V, 2}(G).
\]
The expected result follows from the Cauchy-Schwarz inequality
\[ 
    \abs{\partial_2 G}^2 = \frac{\abs{\int_\bbR g(x_1, x_2) \partial_2g(x_1, x_2)d\mu_{V_1}(x_1)}^2}{\int_\bbR g^2(x_1, x_2)d\mu_{V_1}(x_1)} \leq \int_\bbR \abs{\partial_2 g}^2 d\mu_{V_1}. \qedhere
\]
\end{proof}

\subsection{One-dimensional additive $\varphi$-Brascamp-Lieb inequalities}

Two examples of interest are the families of probability measures $\mu_p$ with density $d\mu_p(x) = Z_p^{-1}e^{-\abs{x}^p}dx$ on $\bbR$ and $\mu_p^+$ with density $d\mu_p^+(x) = (Z_p^+)^{-1}e^{-x^p} \mathds{1}_{[0, \infty)}(x)dx$ on the positive axis, where $Z_p^+, Z_p$ are normalisation constants and $p > 1$. The results of this subsection concerning the additive $\varphi$-Brascamp-Lieb transfer, thanks to Proposition \ref{prop:equivalence}, to the super-Brascamp-Lieb inequality. We will show results in two directions. We first improve an argument of Bobkov and Ledoux when $\varphi = \log$ and $\mu_V = \mu_p^+$, and then provide a general criterion \textit{\`a la} Muckenhoupt.

\subsubsection{Entropic Brascamp-Lieb inequalities for $\mu_p^+$}

Inequality \eqref{eq:additive-brascamp-lieb} for $\varphi = \log$ reads
\begin{equation} \label{eq:additive-brascamp-lieb2} 
    \Ent_{\mu_V}(g^2) = \int g^2 \log \left( \frac{g^2}{\int g^2d\mu_V} \right) d\mu_V \leq C \int \inner{(V'')^{-1}g', g'}d\mu_V.
\end{equation} 
In \cite{bobkov2000brunn} the authors show that, unlike the variance Brascamp-Lieb inequality, the entropic Brascamp-Lieb inequality may fail for some convex potentials $V$ for any constant. However, again by means of the Pr\'ekopa-Leindler inequality, the authors proved in \cite[Proposition~3.4]{bobkov2000brunn} that if for every $h \in \bbR^n$ the map $\bbR^n \ni x \mapsto \inner{V''(x)h, h}$ is concave, then the \eqref{eq:additive-brascamp-lieb2} holds with constant $C = 3$. This however is a very strong assumption; for instance, it does not hold for any $V$ supported on the line. It does hold on the positive axis for $\mu_p^+$ for $2 \leq p \leq 3$. In fact, specifying Bobkov and Ledoux's proof to the particular case $\mu_V = \mu_p^+$, it holds with constant $C = 2$ for all $p \geq 2$. This inequality can then be extended to the line and tensorised. To our best knowledge, this is the first instance of an $L^2$-stability estimate for the Brascamp-Lieb inequality where $V$ has unbounded Hessian, and hence not covered by Proposition~\ref{Proposition2}. 

\begin{proposition}\label{prop:bobkov-ledoux}
    Let $p \geq 2$ and $n \geq 1$. For all $f$ smooth, it holds
    \begin{equation}\label{eq:nu}
        \Ent_\nu(f^2) \leq C \int \inner{(V_{p, n}'')^{-1}f', f'} d\nu(x), \quad (V_{p, n})_{i,j} = \delta_{ij}p(p-1)\abs{x_j}^{p-2},
    \end{equation}
    where $C = 2$ if $\nu = (\mu_p^+)^{\otimes n}$ and $C = 3 + \log 2 \leq 4$ if $\nu = \mu_p^{\otimes n}$.
\end{proposition}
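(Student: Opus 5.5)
The plan is to prove the one-dimensional inequality on the half-line first, pass to the whole line by a symmetrisation/decomposition argument, and then tensorise using Proposition~\ref{prop:tensorisation} with $\varphi = \log$. Indeed, for $\varphi = \log$ the left-hand side of \eqref{eq:additive-brascamp-lieb} is exactly $\Ent_{\mu_V}(g^2)$. Moreover, for $\nu = (\mu_p^+)^{\otimes n}$ or $\mu_p^{\otimes n}$ the potential is $V_{p,n}(x) = \sum_j V(x_j)$ with diagonal Hessian, so the product energy is the sum of the one-dimensional energies. Proposition~\ref{prop:tensorisation} then gives the $n$-dimensional statement with the same constant as in dimension one. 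So the whole problem reduces to $n = 1$.

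For $\mu_p^+$ on $[0,\infty)$ with $V(x) = x^p$, I would specialise the Prékopa--Leindler argument of Bobkov and Ledoux \cite[Proposition~3.4]{bobkov2000brunn}. Given $g$ smooth and small $t$, apply Prékopa--Leindler with parameter $\lambda$ to $u = e^{tg - V}$, $v = e^{-V}$, and $w$ defined through an infimal-convolution of $g$ and $V$. Then expand to second order in $t$; the first-order terms cancel, which is where the normalisation $\int e^{-V} = 1$ is used. The entropy of $e^{tg}$ then appears on one side and a quadratic form involving $(V'')^{-1}$ on the other. The only place where Bobkov--Ledoux used concavity of $x \mapsto V''(x)$ is to bound the midpoint defect
\[
\lambda V(x) + (1-\lambda)V(y) - V(\lambda x + (1-\lambda)y)
\]
from below by $\tfrac{\lambda(1-\lambda)}{2}(x-y)^2$ times $V''$ evaluated at the intermediate point. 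For $V(x) = x^p$ on $[0,\infty)$ with $p \geq 2$, I would verify this directly. Writing the defect as $\lambda(1-\lambda)(x-y)^2\int_0^1 (\text{kernel})\,V''(\cdot)\,ds$ and using that $s \mapsto s^{p-2}$ is monotone on the half-line, it suffices to compare the kernel average of $V''$ with its value at the convex combination. This needs a sign argument rather than concavity, and I expect it to yield the constant $C = 2$ instead of $3$. This comparison is the step I expect to be the main obstacle: the inequality has to hold uniformly in $p \geq 2$ without concavity of $x^{p-2}$, and the constant $2$ must survive the second-order expansion. If the direct comparison fails near $0$, I would try restricting to $\lambda = 1/2$ and optimising the remainder, or substituting $x = y^{2/p}$ to make the Hessian weight constant.

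To pass from $\mu_p^+$ to $\mu_p$, write $\mu_p = \tfrac12(\mu_p^+ + \check\mu_p^+)$, where $\check\mu_p^+$ is the reflection of $\mu_p^+$. Set $a = \int_{[0,\infty)} f^2 \, d\mu_p^+$ and $b = \int_{[0,\infty)} f(-x)^2 \, d\mu_p^+$. The chain rule for entropy gives
\[
\Ent_{\mu_p}(f^2) = \tfrac12 \Ent_{\mu_p^+}(f^2) + \tfrac12 \Ent_{\check\mu_p^+}(f^2) + \Ent_{\mathrm{Ber}}(a,b),
\]
where $\Ent_{\mathrm{Ber}}(a,b)$ is the entropy of the two-point function taking values $a, b$ under the symmetric Bernoulli law. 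The first two terms are bounded by $2$ times the energy by the half-line result, since the half-energies sum to the full energy. For the Bernoulli term, I would use the two-point log-Sobolev inequality $\Ent_{\mathrm{Ber}}(a,b) \leq \tfrac12(\sqrt a - \sqrt b)^2$. I would then bound $(\sqrt a - \sqrt b)^2$ by a multiple of $\Var_{\mu_p}(\abs{f})$ plus an error. This error is controlled by an elementary estimate costing at most a $\log 2$ contribution, and the variance Brascamp--Lieb inequality \eqref{brascamp-lieb} is applied to $\abs{f}$ (after smoothing). This would give the extra $1 + \log 2$ and hence $C = 3 + \log 2$. The bookkeeping of this last constant is routine but fiddly, and I would settle for any bound $\leq 4$ if the sharp $\log 2$ is elusive.
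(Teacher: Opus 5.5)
\textbf{The step you flag as the main obstacle is a genuine gap, and it cannot be filled.} Your architecture is the paper's: reduce to $n=1$ by Proposition~\ref{prop:tensorisation}, redo the Bobkov--Ledoux Pr\'ekopa--Leindler argument on the half-line with a sharper bound on the convexity defect, then split $\mu_p$ into its two halves. For $V(x)=x^p$ with $p>2$, the bound
\[
\lambda V(x)+(1-\lambda)V(y)-V(z)\geq \tfrac{\lambda(1-\lambda)}{2}\,V''(z)\,(x-y)^2,\qquad z=\lambda x+(1-\lambda)y,
\]
is false. The averaging kernel also charges the side of $z$ where $V''<V''(z)$, so monotonicity of $V''$ gives no sign. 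For $V=x^3$, $x=1$, $y=0$, $\lambda=0.9$, the left side is $0.171$ and the right side is $0.243$.

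This is not an artefact of a bad triple. Divide by $1-\lambda$ and let $\lambda\to1$, which is the limit that produces the entropy. You then need $V(y)-V(x)-V'(x)(y-x)\geq\frac12V''(x)(x-y)^2$. By Taylor's formula the difference of the two sides is $\frac12\int_x^yV'''(r)(y-r)^2\,dr$, which is negative whenever $y<x$, not only near $0$. The infimal convolution uses both signs of $x-y$, according to the sign of $g'$. Restricting to $\lambda=\frac12$ does not help either, since the entropy comes from $\lambda\to1$.

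The paper's proof makes the same step: it Taylor-expands at $z$ and asserts that $V'''\geq0$ makes the remainder nonnegative. It breaks for the same reason. The remainder attached to whichever of $x,y$ lies below $z$ is nonpositive; for $V=x^3$, in the paper's notation, the total remainder equals $ts(x-y)^3(s-t)$.

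\textbf{In fact the constant $2$ is false for every $p>2$, so no sign argument can rescue it.} Your fallback substitution shows why. With $\kappa=2\sqrt{(p-1)/p}$ and $u=\kappa x^{p/2}$ one has $(du/dx)^2=V''(x)$. So the weighted energy becomes $\int(\partial_uf)^2$, and $\mu_p^+$ becomes a measure with density proportional to $u^{2/p-1}e^{-u^2/\kappa^2}$. Its Gaussian tail forces a log-Sobolev constant of at least $\kappa^2$. Explicitly, $f_\lambda=e^{\lambda\kappa x^{p/2}}$ satisfies $(f_\lambda')^2=\lambda^2V''f_\lambda^2$ and $f_\lambda^2e^{-x^p}=e^{\lambda^2\kappa^2-(x^{p/2}-\lambda\kappa)^2}$. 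Laplace's method then gives
\[
\frac{\Ent_{\mu_p^+}(f_\lambda^2)}{\int (f_\lambda')^2/V''\,d\mu_p^+}\longrightarrow\kappa^2=\frac{4(p-1)}{p}>2\qquad(\lambda\to\infty).
\]
For $p=4$ this is the smooth family $e^{\sqrt{3}\lambda x^2}$, with limit $3$. The even extension $e^{\lambda\kappa\abs{x}^{p/2}}$ gives the same ratio for $\mu_p$, which exceeds $3+\log2$ once $p>4/(1-\log2)\approx13$.

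What survives is the Bobkov--Ledoux concavity argument, which gives $3$ for $2\leq p\leq3$ (consistent with $4(p-1)/p\leq 8/3$). Beyond that you can hope at best for a $p$-dependent constant, necessarily at least $4(p-1)/p$.

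Your whole-line step is a sound alternative to the paper's Rothaus-lemma route. It uses the entropy chain rule, Gross's two-point inequality, $(\sqrt{a}-\sqrt{b})^2\leq4\Var_{\mu_p}(\abs{f})$, and then variance Brascamp--Lieb. It turns a half-line constant $C_+$ into $C_++2$; your announced $1+\log2$ is unsupported. The paper's route, once $\int\cdot\,d\mu_p^++\int\cdot\,d\mu_p^-=2\int\cdot\,d\mu_p$ is tracked, gives $C_++2+\log2$. Either way, it can only transport a half-line constant that is actually true.
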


\begin{proof}
In both cases it suffices to consider $n = 1$ since the one-dimensional inequality can be tensorised according to Proposition \ref{prop:tensorisation}. 

For $\mu_p^+$, the proof of the entropic Brascamp-Lieb inequality of \cite[Proposition~3.4]{bobkov2000brunn} depends upon a lower bound of the form $L(s) \coloneq tV(x) + sV(y) - V(z) \gtrsim \inner{V''(z)k, k}$ where $x, y \in (0, \infty)$, $z = tx + sy$, $t + s = 1$, and $k = x - y$. Starting from the representation formula
\[ 
    L(s) = \frac{ts}{2}\int_0^1 (s\inner{V''(rz + (1-r)x)k, k} + t\inner{V''(rz + (1-r)y)k, k})dr^2, 
\]
one may prove $L(s) \geq \frac{ts}{3}\inner{V''(z)k, k}$ by using the convexity of $V$ and concavity of $V''$, the constant $3$ in the denominator manifesting in the constant of \cite[Proposition~3.4]{bobkov2000brunn}. Alternatively, returning to the representation formula, one can Taylor expand to higher order. Namely, 
\[ 
    V(x) = V(z) + V'(z)(x - z) + \frac{V''(z)}{2}(x - z)^2 + \frac{1}{2}\int_z^x V'''(r)(x - r)^2dr 
\] 
and similarly $V(y)$ satisfies the same formula with $y$ replacing $x$ everywhere. With $\cI(x) = \frac{1}{2}\int_z^x V'''(r)(x - r)^2dr$ the integral remainder above, we have 
\begin{align*}
    L(s) 
    &= \left[tV(z) + sV(z) - V(z) \right] + 
    \left[V'(z)(t(x - z) + s(y - z)) \right] \vphantom{\left[\frac{V''(z)}{2}(t(x - z)^2 + s(y - z)^2) \right] + 
    \left[ \frac{t}{2} \cI(x) + \frac{s}{2} \cI(y) \right]} \\ 
    &+ \left[\frac{V''(z)}{2}(t(x - z)^2 + s(y - z)^2) \right] + 
    \left[ \frac{t}{2} \cI(x) + \frac{s}{2} \cI(y) \right]
\end{align*}
and so the first and second terms vanish, the third term contains the bound we desire, and the fourth, if we assume $V''' \geq 0$, is nonnegative. This is valid for $\mu_p^+$, $p \geq 2$, and hence
\[ 
    L(s) \geq \frac{V''(z)}{2}(t(x - z)^2 + s(y - z)^2) = \frac{V''(z)}{2}(ts^2k^2 + s^2tk^2) = \frac{ts}{2} V''(z)k^2 
\]
since $x - z = -sk$ and $y - z = tk$. The proof then continues as before and we conclude \eqref{eq:nu} for $\mu_p^+$.

For $\mu_p$, let $f$ be smooth on $\bbR$ and denote by $\mu_p^-$ the measure on the negative axis with density $d\mu_p^-(x) = (Z_p^-)^{-1}e^{-(-x)^p} \mathds{1}_{(-\infty,0]}(x)dx$ and where $Z_p^- = Z_p^+$. Write $\mu_p = \frac{1}{2}(\mu_p^- + \mu_p^+)$. Then if $\int f^2 d\mu_p = 1$, one has 
\[ 
    2\Ent_{\mu_p}(f^2) 
    = 2 \int_\bbR f^2 \log f^2 d\mu_p
    = \int_{\bbR^-} f^2 \log f^2 d\mu_p^- + \int_{\bbR^+} f^2 \log f^2 d\mu_p^+.
\]
These two integrals contain the $\mu_p^\pm$-entropy up to a correction, namely 
\[ 
    \Ent_{\mu_p^-}(f^2) + \Ent_{\mu_p^+}(f^2) + \int f^2d\mu_p^- \log \left( \int f^2d\mu_p^- \right) + \int f^2d\mu_p^+ \log \left( \int f^2d\mu_p^+ \right)
\] 
which implies 
\[ 
    2\Ent_{\mu_p}(f^2) \leq \Ent_{\mu_p^-}(f^2) + \Ent_{\mu_p^+}(f^2) + 2\log 2
\]
since $\int f^2 d\mu_p^- + \int f^2 d\mu_p^+ = 2$ and $\sup_{x \in[0,2]} x \log x + (2-x)\log(2-x) = 2\log 2$. Since by symmetry \eqref{eq:nu} holds also for $\mu_p^-$, summing the two entropic Brascamp-Lieb inequalities on the negative and positive axes and removing by homogeneity the assumption $\int f^2d\mu_p = 1$ we arrive at  
\[ 
    \Ent_{\mu_p}(f^2) \leq \frac{1}{p(p-1)} \int_{-\infty}^\infty \abs{f'(x)}^2\abs{x}^{2-p}d\mu_p(x) + \log 2 \int f^2d\mu_p.
\]
By Rothaus' lemma \cite{rothaus1986hypercontractivity} asserting $\Ent_{\mu_p}(f^2) \leq 2 \Var_{\mu_p}(f) + \Ent_{\mu_p}(\widetilde{f}^2)$ for $\widetilde{f} = f - \int fd\mu_p$, we arrive at 
\begin{align*}
    \Ent_{\mu_p}(f^2) 
    &\leq 2 \Var_{\mu_p}(f) + \frac{1}{p(p-1)} \int_{-\infty}^\infty \smallabs{\widetilde{f}'(x)}^2\abs{x}^{2-p}d\mu_p(x) + \log 2 \int \widetilde{f}^2d\mu_p 
\end{align*}
and by applying the variance Brascamp-Lieb inequality \eqref{brascamp-lieb} on the first and third terms we conclude \eqref{eq:nu} for $\mu_p$.
\end{proof}

\subsubsection{Muckenhoupt criterion for additive $\varphi$-Brascamp-Lieb inequalities}

Originally developed for $L^p$-Hardy inequalities, $p \geq 1$, on the line with respect to two measures $\mu, \nu$, the Muckenhoupt criterion \cite{muckenhoupt1972hardy} characterises (up to universal constants) the Hardy constant in terms of a geometric constant defined in terms of the cumulative distribution functions of $\mu$ and $\nu$. Here we will develop a Muckenhoupt-type criterion for the additive $\varphi$-Brascamp-Lieb inequality. 

\begin{theorem}\label{thm:muckenhoupt}
Let $\mu_V$ be a probability measure on $\bbR$, with symmetric convex potential $V$. Let $\beta \colon [1,\infty) \to [0,\infty)$ be non-increasing. 
Define 
\[ 
    B = \sup_{x > 0} \frac{\mu_V((x,\infty))}{\beta(1/\mu_V((x,\infty)))} \int_0^x V''(t)e^{V(t)}dt \in [0,\infty].
\]
The following holds.
\begin{itemize}
    \item[(i)] Assume $s \mapsto s\beta(s)$ is non-decreasing on $[2,\infty)$ and that $B < \infty$. Then for all $f$ smooth and all $s \geq 1$,
    \[ 
        \int f^2 d\mu_V \leq 8 \beta(s) \int_\bbR \frac{{f'}^2}{V''} d\mu_V + s \left( \int \abs{f} d\mu_V \right)^2.
    \]
    \item[(ii)] Assume $\mu_V$ satisfies the super-Brascamp-Lieb inequality \eqref{eq:super-brascamp-lieb-full} and that there exists $\varphi$ as in Item $(ii)$ of Proposition  \ref{prop:equivalence}. Assume furthermore that $x \mapsto (\varphi(x)-\varphi(1))/x$ is
    non-increasing on $[2,\infty)$ and that there exists $\theta >4$ such that for all $x \geq 2,$ $\varphi(\theta x) - \varphi(1) \leq \frac{\theta}{4}(\varphi(x)-\varphi(1))$. Then $B < \infty$. 
\end{itemize}
\end{theorem}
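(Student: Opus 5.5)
We would prove the two items separately, reducing everything to a Hardy-type problem on the half-line $(0,\infty)$. Since $V$ is symmetric, $0$ is a median of $\mu_V$. Write the energy as $\cE_V(f)=\int f'^2/V''\,d\mu_V$ and put
\[
I(x)=\int_0^x V''(t)e^{V(t)}dt .
\]
The key one-dimensional fact is a capacity identity. For $x>0$, consider the capacity of $(x,\infty)$ relative to $(0,\infty)$, taken among functions $g$ with $g(0)=0$ and $g\geq 1$ on $(x,\infty)$, with energy $\int_0^\infty g'^2/V''\,d\mu_V$. This capacity equals $(Z\,I(x))^{-1}$, where $Z$ is the normalising constant, so that $d\mu_V=Z^{-1}e^{-V}dx$. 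Indeed, by Cauchy--Schwarz the minimiser is
\[
g_x(t)=\frac{I(\min(t,x))}{I(x)} .
\]
In this language the hypothesis $B<\infty$ reads
\[
\Capa((x,\infty))\geq \frac{\mu_V((x,\infty))}{B\,\beta\big(1/\mu_V((x,\infty))\big)} .
\]
This is exactly the capacity--measure condition of Barthe--Cattiaux--Roberto for a super-Poincar\'e type inequality, with the Dirichlet form replaced by the Brascamp--Lieb energy.

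For Item (i), we first treat a function $f$ supported on $[0,\infty)$ with $f(0)=0$, and run the capacity argument along dyadic level sets. This mirrors the level-set decomposition in the proof of Proposition~\ref{prop:equivalence}(ii). The sets are $\Omega_k=\{f^2\geq 2^k\}$, each of the form $(x_k,\infty)$ up to monotone rearrangement, and we use truncations of $f$ between consecutive levels. The capacity bound controls $\mu_V(\Omega_k)$ by $\beta(1/\mu_V(\Omega_k))$ times the energy on the corresponding slab. Summing over $k$, the levels with $\mu_V(\Omega_k)\leq 1/s$ contribute at most $\beta(s)\,\cE_V(f)$ up to a constant; here the monotonicity of $s\mapsto s\beta(s)$ is used to compare $\beta(1/\mu_V(\Omega_k))$ with $\beta(s)$. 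The levels with $\mu_V(\Omega_k)>1/s$ are bounded via Markov's inequality by $s(\int\abs{f}d\mu_V)^2$. For general $f$ on $\bbR$, we split $f=f\mathds{1}_{(-\infty,0]}+f\mathds{1}_{[0,\infty)}$ and, by symmetry, apply the half-line estimate to $(f-f(0))_\pm$ on each side. The constant $f(0)$ is absorbed using that $\mu_V$ gives mass $1/2$ to each half-line. The factor $8$ should come from the Muckenhoupt constant $4$ times $2$ from this splitting. The main obstacle is bookkeeping the level truncation so that the loss is exactly $\beta(s)$ and not $\beta(s/c)$; if needed, we can fall back on adjusting constants through the monotonicity of $s\beta(s)$.

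For Item (ii), Proposition~\ref{prop:equivalence}(ii) gives the additive $\varphi$-Brascamp--Lieb inequality. We test it on $g_x$ (set to $0$ on $(-\infty,0)$), whose energy is $1/(Z I(x))$ and which equals $1$ on $(x,\infty)$. Write $m=\mu_V((x,\infty))\leq 1/2$ and $a=\int g_x^2\,d\mu_V\geq m$. The left-hand side is bounded below by $m\varphi(1)-a\varphi(a)$ plus the contribution of $(0,x)$, where $g_x\in[0,1]$ and $x\varphi(x)\to 0$ as $x\to 0$. The goal is a lower bound of order $m\big(\varphi(1/m)-\varphi(1)\big)$. For this we first replace $g_x$ by the rescaled truncation $\min(1,\lambda g_x)$, normalised so that $a$ is comparable to $m$, say $a\leq\theta m$. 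The growth hypotheses are designed for this step: $(\varphi(y)-\varphi(1))/y$ non-increasing and $\varphi(\theta y)-\varphi(1)\leq\frac{\theta}{4}(\varphi(y)-\varphi(1))$ give $a\varphi(a)$-type losses at most a quarter of the main term. Combined with $\varphi=1/\beta$, this yields
\[
\frac{m}{\beta(1/m)}\lesssim C_\varphi\,\frac{1}{Z I(x)} ,
\]
which is $B<\infty$. We expect the choice of the rescaling $\lambda$ and the control of $\int g_x^2$ on $(0,x)$ to be the delicate step; the first thing to try is the level $\lambda$ at which $\mu_V(\lambda g_x\geq 1)$ equals $\theta m$.
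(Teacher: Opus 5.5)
Your key observation, the one-dimensional formula $\Capa(A,(0,\infty))=\Capa((\inf A,\infty),(0,\infty))=1/\int_0^{\inf A}V''e^{V}$ for the Brascamp--Lieb energy, is exactly the new ingredient in the paper. The paper then feeds it into ready-made results of Barthe--Cattiaux--Roberto: \cite[Theorem~1 and Corollary~6]{barthe2007isoperimetry} for (i) and \cite[Theorem~22]{barthe2006interpolated} for (ii). You instead try to re-derive these by hand, and neither derivation closes.

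In (i), the reduction ``apply the half-line estimate to $(f-f(0))_\pm$ and absorb $f(0)$ using the mass $1/2$ of each half-line'' is not an argument. The quantity $\int f^2\,d\mu_V-s\bigl(\int\abs{f}\,d\mu_V\bigr)^2$ is not invariant under $f\mapsto f+c$, so unlike for the variance you cannot reduce to $f(0)=0$. Moreover, the truncations of $\abs{f}$ at levels $t<\abs{f(0)}$ do not vanish at $0$, so $\Capa(\cdot,(0,\infty))$ gives no information on them. The cited \cite[Theorem~1]{barthe2007isoperimetry} handles general $f$ directly, with a criterion over all $A\subset(0,\infty)$ weighted by $\mu_V(A)/(1+(s-1)\mu_V(A))$. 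The paper then converts this to $B$ via \cite[Corollary~6]{barthe2007isoperimetry}, $\sup_{s\ge1}\frac{\mu}{(1+(s-1)\mu)\beta(s)}\le\frac{2\mu}{\beta(1/\mu)}$. That step, not the splitting at $0$, is where the monotonicity of $s\beta(s)$ and the $2$ in $8=4\cdot2$ come from. What the argument actually yields is $8B\beta(s)$; the factor $B$ is missing from the statement as printed. Your dyadic Markov step also produces $c\,s(\int\abs{f}\,d\mu_V)^2$ with $c>1$, and trading $s$ for $s/c$ via $\beta(s/c)\le c\beta(s)$ is only available for $s\ge 2c$. Cutting exactly at the $1/s$-quantile of $\abs{f}$ would avoid this loss. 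Finally, level sets are not half-lines, and monotone rearrangement is not available for the weighted energy. Use $\Capa(A)=\Capa((\inf A,\infty))$ and $\mu_V(A)\le\mu_V((\inf A,\infty))$ instead.

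In (ii) the argument fails as designed. The additive $\varphi$-Brascamp--Lieb inequality is not homogeneous, and test functions bounded by $1$ cannot produce the factor $\varphi(1/m)$. For any $0\le g\le1$ with $a=\int g^2\,d\mu_V$, monotonicity of $\varphi$ gives $\int g^2\varphi(g^2)\,d\mu_V-a\varphi(a)\le a\bigl(\varphi(1)-\varphi(a)\bigr)$. With $a\le\theta m$ this is at most $\theta m\bigl(\varphi(1)-\varphi(0^+)\bigr)=O(m)$ whenever $\varphi(0^+)>-\infty$. The hypotheses allow this, since they constrain $\varphi$ near $0$ only qualitatively. You would then only get the Poincar\'e-type bound $m\lesssim C_\varphi\Capa((x,\infty),(0,\infty))$, instead of $m\varphi(1/m)\lesssim C_\varphi\Capa((x,\infty),(0,\infty))$. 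Your lower bound $m\varphi(1)-a\varphi(a)$ has the right size only when $\varphi(1)-\varphi(m)\gtrsim\varphi(1/m)$, as for $\varphi=\log$.

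The rescaled truncation does not help either. For $\lambda>1$, $\min(1,\lambda g_x)$ is just $g_{x'}$ with $I(x')=I(x)/\lambda$, the capacitary potential of the larger set $(x',\infty)$. It does not remove the transition region, and a bound for $(x',\infty)$ with $\lambda$ uncontrolled says nothing about $m\,I(x)/\beta(1/m)$. What is needed is to test at amplitude of order $m^{-1/2}$, and then to control both the part of the test function on $(0,x)$ and the term $a\varphi(a)$. \cite[Theorem~22]{barthe2006interpolated} provides exactly this, via comparison with Orlicz-type norms of $g^2$ that are monotone under $g\ge\mathds{1}_{(x,\infty)}$, and it is there that the two growth conditions on $\varphi$ are used. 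The paper simply invokes that theorem together with the capacity formula.
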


The theorem shows that, under mild assumptions, the super-Brascamp-Lieb inequality holds if and only if the geometric constant $B$ is finite. For instance, Theorem \ref{thm:muckenhoupt} implies the entropic Brascamp-Lieb inequality (and by Proposition \ref{prop:equivalence} the super-Brascamp-Lieb inequality with $\beta(s) = 1 + \log s$, $s \geq 1$) holds if and only if
\[ 
    B_{\log} = \sup_{x > 0} \mu_V((x,\infty)) \left(1 + \log \left( \frac{1}{\mu_V((x,\infty))} \right)\right) \int_0^x V''(t)e^{V(t)}dt < \infty. 
\]

The proof of Theorem \ref{thm:muckenhoupt} is based on  general ideas developed in \cite{barthe2003sobolev, barthe2006interpolated, barthe2007isoperimetry}, after the seminal work of Bobkov and G\"otze \cite{bobkov1999exponential} who first established the connection between Sobolev-type inequalities in probability spaces in dimension 1 and Muc\-ken\-houpt-type criteria.

\begin{proof}
We provide a sketch for brevity. Following the proof of \cite[Theorem 1]{barthe2007isoperimetry} we find
\[ 
    \int f^2 d\mu_V - s \left( \int \abs{f} d\mu_V \right)^2 \leq 4 B_s \int \frac{{f'}^2}{V''}d\mu_V
\]
for any $s \geq 1$, with
$B_s$ the smallest constant so that for all $A \subset (0,\infty)$ 
\[ 
    B_s \Capa(A, (0, \infty)) \geq \frac{\mu_V(A)}{1+(s-1)\mu_V(A)} .
\]
Here $\Capa(A, (0, \infty))$ denotes the capacity of a set $A$ with respect to $(0,\infty)$, a notion introduced in \cite{barthe2003sobolev} and adapting the well-known (electrostatic) capacity of Maz'ya \cite{mazya1985prostranstva} to probability spaces. The only difference with \cite[Theorem 1]{barthe2007isoperimetry} is that the energy term in the super-Brascamp-Lieb inequality comes with a measure different from $\mu_V$, say $\nu$ with density $d\nu(x) = (V'')^{-1}e^{-V}dx$, that contains the ``Brascamp-Lieb" weight $(V'')^{-1}$.

The key observation is that (see the proof of \cite[Theorem 2.3]{barthe2005concentration} and \cite[Appendix]{barthe2003sobolev} for details) 
\[
    \Capa(A, (0, \infty)) = \Capa((\inf A, \infty), (0, \infty)) = \frac{1}{\int_0^{\inf A} V''(t)e^{V(t)}dt}.
\]
Since 
\[ \frac{\mu_V(A)}{1+(s-1)\mu_V(A)} \leq \frac{\mu_V((\inf A,\infty))}{1+(s-1)\mu_V((\inf A,\infty))}, \]
the smallest constant $B_s$ above reduces to  
\[ 
    B_s = \sup_{x >0} \frac{\mu_V((x,\infty))}{1 + (s-1)\mu_V((x,\infty))} \int_0^x V''(t)e^{V(t)}dt 
\]
Finally, under the assumption of the theorem, it is proved in \cite[Corollary 6]{barthe2007isoperimetry} that 
\[ 
    \sup_{s \geq 1} \frac{\mu_V((x,\infty))}{[1+(s-1)\mu_V((x,\infty))]\beta(s)} \leq \frac{2\mu_V((x,\infty))}{\beta(1/\mu_V((x,\infty))}
\]
from which Item $(i)$ follows.

For Item $(ii)$ we observe that \eqref{eq:super-brascamp-lieb-full} implies \eqref{eq:additive-brascamp-lieb} by Proposition \ref{prop:equivalence}. Then we invoke \cite[Theorem~22]{barthe2006interpolated}, see also \cite[\S5.6]{barthe2006interpolated}, together with the relation between the capacity and $\int_0^x V''(t)e^{V(t)}dt$, to conclude that $B < \infty$.
\end{proof}

\begin{acknowledgements}
    \textup{We warmly thank the referee for their thorough reading of the paper and for their feedback which helped improve the exposition of the paper.}  
\end{acknowledgements}

\printbibliography

@book{ane2000inegalites,
  title={Sur les in{\'e}galit{\'e}s de Sobolev logarithmiques},
  author={An{\'e}, C{\'e}cile and Blach{\`e}re, S{\'e}bastien and Chafa{\"\i}, Djalil and Foug{\`e}res, Pierre and Gentil, Ivan and Malrieu, Florent and Roberto, Cyril and Scheffer, Gr{\'e}gory},
  volume={10},
  year={2000},
  publisher={Soci{\'e}t{\'e} math{\'e}matique de France Paris}
}

@incollection{bakry1985diffusions,
  title={Diffusions hypercontractives},
  author={Bakry, Dominique and {\'E}mery, Michel},
  booktitle={S{\'e}minaire de Probabilit{\'e}s XIX 1983/84: Proceedings},
  pages={177--206},
  year={1985},
  publisher={Springer}
}

@book{bakry2013analysis,
  title={Analysis and Geometry of Markov diffusion operators},
  author={Bakry, Dominique and Gentil, Ivan and Ledoux, Michel},
  volume={348},
  year={2013},
  publisher={Springer Science \& Business Media}
}

@article{barthe2003sobolev,
  title={Sobolev inequalities for probability measures on the real line},
  author={Barthe, Franck and Roberto, Cyril},
  journal={Stud. Math.},
  volume={159},
  number={3},
  pages={481--497},
  year={2003}
}

@article{barthe2005concentration,
 author = {Barthe, Franck and Cattiaux, Patrick and Roberto, Cyril},
 title = {Concentration for independent random variables with heavy tails},
 fjournal = {AMRX. Applied Mathematics Research eXpress},
 journal = {AMRX, Appl. Math. Res. Express},
 issn = {1687-1200},
 volume = {2005},
 number = {2},
 pages = {39--60},
 year = {2005},
 doi = {10.1155/AMRX.2005.39},
 zbMATH = {5017596},
 Zbl = {1094.60010}
}

@article{barthe2006interpolated,
 author = {Barthe, Franck and Cattiaux, Patrick and Roberto, Cyril},
 title = {Interpolated inequalities between exponential and {Gaussian}, {Orlicz} hypercontractivity and isoperimetry},
 fjournal = {Revista Matem{\'a}tica Iberoamericana},
 journal = {Rev. Mat. Iberoam.},
 issn = {0213-2230},
 volume = {22},
 number = {3},
 pages = {993--1067},
 year = {2006},
 doi = {10.4171/RMI/482},
 url = {https://eudml.org/doc/42001},
 zbMATH = {5149127},
 Zbl = {1118.26014}
}

@article{barthe2007isoperimetry,
 author = {Barthe, Franck and Cattiaux, Patrick and Roberto, Cyril},
 title = {Isoperimetry between exponential and {Gaussian}},
 fjournal = {Electronic Journal of Probability},
 journal = {Electron. J. Probab.},
 issn = {1083-6489},
 volume = {12},
 pages = {1212--1237},
 year = {2007},
 doi = {10.1214/EJP.v12-441},
 url = {https://eudml.org/doc/128514},
 zbMATH = {5214066},
 Zbl = {1132.26005}
}

@article{bobkov2000brunn,
  title={From Brunn-Minkowski to Brascamp-Lieb and to logarithmic Sobolev inequalities},
  author={Bobkov, Sergey G and Ledoux, Michel},
  journal={Geom. Funct. Anal.},
  volume={10},
  number={5},
  pages={1028--1052},
  year={2000},
  publisher={Springer}
}

@article{bobkov1999exponential,
 author = {Bobkov, S. G. and G{\"o}tze, F.},
 title = {Exponential integrability and transportation cost related to logarithmic {Sobolev} inequalities},
 fjournal = {Journal of Functional Analysis},
 journal = {J. Funct. Anal.},
 issn = {0022-1236},
 volume = {163},
 number = {1},
 pages = {1--28},
 year = {1999},
 doi = {10.1006/jfan.1998.3326},
 zbMATH = {1278792},
 Zbl = {0924.46027}
}

@article{bolley2018dimensional,
  title={Dimensional improvements of the logarithmic Sobolev, Talagrand and Brascamp--Lieb inequalities},
  author={Bolley, Fran{\c{c}}ois and Gentil, Ivan and Guillin, Arnaud},
  journal={Ann. Probab.},
  volume={46},
  number={1},
  pages={261--301},
  year={2018},
  publisher={JSTOR}
}

@article{bonnefont2014intertwining,
 author = {Bonnefont, Michel and Joulin, Ald{\'e}ric},
 title = {Intertwining relations for one-dimension-al diffusions and application to functional inequalities},
 fjournal = {Potential Analysis},
 journal = {Potential Anal.},
 issn = {0926-2601},
 volume = {41},
 number = {4},
 pages = {1005--1031},
 year = {2014},
 doi = {10.1007/s11118-014-9408-7},
 zbMATH = {6369341},
 Zbl = {1311.60086}
}

@article{bonnefont2017intertwinings,
 author = {Arnaudon, Marc and Bonnefont, Michel and Joulin, Ald{\'e}ric},
 title = {Intertwinings and generalized {Brascamp}-{Lieb} inequalities},
 fjournal = {Revista Matem{\'a}tica Iberoamericana},
 journal = {Rev. Mat. Iberoam.},
 issn = {0213-2230},
 volume = {34},
 number = {3},
 pages = {1021--1054},
 year = {2018},
 doi = {10.4171/RMI/1014},
 zbMATH = {6966782},
 Zbl = {1428.60112}
}

@article{brascamp1976extensions,
 author = {Brascamp, Herm Jan and Lieb, Elliott H.},
 title = {On extensions of the {Brunn}-Minkows-ki and {Prekopa}-{Leindler} theorems, including inequalities for log concave functions, and with an application to the diffusion equation},
 fjournal = {Journal of Functional Analysis},
 journal = {J. Funct. Anal.},
 issn = {0022-1236},
 volume = {22},
 pages = {366--389},
 year = {1976},
 doi = {10.1016/0022-1236(76)90004-5},
 zbMATH = {3522267},
 Zbl = {0334.26009}
}

@article{cordero2017transport,
  title={Transport inequalities for log-concave measures, quantitative forms, and applications},
  author={Cordero-Erausquin, Dario},
  journal={Canad. J. Math.},
  volume={69},
  number={3},
  pages={481--501},
  year={2017},
  publisher={Cambridge University Press}
}

@article{davies1984ultracontractivity,
  title={Ultracontractivity and the heat kernel for Schr{\"o}d\-inger operators and Dirichlet Laplacians},
  author={Davies, Edward Brian and Simon, Barry},
  fjournal={Journal of Functional Analysis},
  journal={J. Funct. Anal.},
  volume={59},
  number={2},
  pages={335--395},
  year={1984},
  publisher={Elsevier}
}

@article{figalli2025sharp,
  title={Sharp Quantitative Stability for the Pr\'ekopa-Leindler and Borell-Brascamp-Lieb Inequalities},
  author={Figalli, Alessio and {van Hintum}, Peter and Tiba, Marius},
  journal={arXiv:2501.04656},
  year={2025}
}

@article{harge2008reinforcement,
 author = {Harg{\'e}, Gilles},
 title = {Reinforcement of an inequality due to {Brascamp} and {Lieb}},
 fjournal = {Journal of Functional Analysis},
 journal = {J. Funct. Anal.},
 issn = {0022-1236},
 volume = {254},
 number = {2},
 pages = {267--300},
 year = {2008},
 doi = {10.1016/j.jfa.2007.07.019},
 zbMATH = {5234461},
 Zbl = {1138.28002}
}

@article{livshyts2024conjectural,
 author = {Livshyts, Galyna V.},
 title = {On a conjectural symmetric version of {Ehrhard}'s inequality},
 fjournal = {Transactions of the American Mathematical Society},
 journal = {Trans. Am. Math. Soc.},
 issn = {0002-9947},
 volume = {377},
 number = {7},
 pages = {5027--5085},
 year = {2024},
 doi = {10.1090/tran/9177},
 zbMATH = {7891376},
 Zbl = {1544.52006}
}

@article{machado2025quantitative,
  title={Quantitative stability for the Brascamp-Lieb inequality and moment measures},
  author={Machado, Jo{\~a}o Miguel and Ramos, Jo{\~a}o P. G.},
  journal={arXiv:2511.22636},
  year={2025}
}

@book{mazya1985prostranstva,
 author = {Maz'ya, V. G.},
 title = {Prostranstva {S}. {L}. {Soboleva}},
 year = {1985},
 publisher = {Leningrad: Izdatel'stvo Leningradskogo Universiteta},
 language = {Russian},
 zbMATH = {44836},
 Zbl = {0727.46017}
}

@article{muckenhoupt1972hardy,
 author = {Muckenhoupt, Benjamin},
 title = {Hardy's inequality with weights},
 fjournal = {Studia Mathematica},
 journal = {Stud. Math.},
 issn = {0039-3223},
 volume = {44},
 pages = {31--38},
 year = {1972},
 doi = {10.4064/sm-44-1-31-38},
 url = {https://eudml.org/doc/217718},
 zbMATH = {3374075},
 Zbl = {0236.26015}
}

@article{rothaus1986hypercontractivity,
 author = {Rothaus, O. S.},
 title = {Hypercontractivity and the {Bakry}-{Emery} criterion for compact {Lie} groups},
 fjournal = {Journal of Functional Analysis},
 journal = {J. Funct. Anal.},
 issn = {0022-1236},
 volume = {65},
 pages = {358--367},
 year = {1986},
 doi = {10.1016/0022-1236(86)90025-X},
 zbMATH = {3947286},
 Zbl = {0589.58036}
}

@article{wang2000functional,
  title={Functional inequalities for empty essential spectrum},
  author={Wang, Feng Yu},
  journal={J. Funct. Anal.},
  volume={170},
  number={1},
  pages={219--245},
  year={2000},
  publisher={Elsevier}
}

\end{document}